\documentclass{amsart}

\usepackage{epsfig}
\usepackage[all]{xy}
\usepackage{algorithmic}
\usepackage{algorithm}
\usepackage{enumerate}


\newtheorem{theorem}{Theorem}[section]

\newtheorem{lemma}[theorem]{Lemma}
\newtheorem{proposition}[theorem]{Proposition}
\theoremstyle{definition}
\newtheorem{definition}[theorem]{Definition}
\newtheorem{example}[theorem]{Example}
\theoremstyle{remark}
\newtheorem{remark}[theorem]{Remark}
\numberwithin{equation}{section}


\newcommand{\im}{\operatorname{Im}}
\newcommand{\SL}{\operatorname{SL}}

\newcommand{\Int}{\operatorname{Int}}
\newcommand{\Hom}{\operatorname{Hom}}
\newcommand{\Ext}{\operatorname{Ext}}
\newcommand{\Coker}{\operatorname{Coker}}
\newcommand{\cH}{\mathcal{H}}
\newcommand{\N}{\mathbb{N}}
\newcommand{\Z}{\mathbb{Z}}
\newcommand{\Q}{\mathbb{Q}}
\newcommand{\R}{\mathbb{R}}
\newcommand{\C}{\mathbb{C}}
\newcommand{\Symb}{\operatorname{Symb}}
\newcommand{\Div}{\operatorname{Div}}
\newcommand{\cD}{\mathcal{D}}

\title[Cohomological interpretation of quadratic modular symbols]{Cohomological interpretation of quadratic modular symbols}

\author[P.~Bayer]{Pilar Bayer$^*$}
\thanks{$^{*}$Partially supported by MTM2009-07024}
\address{Departament d'\`Algebra i Geometria, Universitat de Barcelona, Gran Via de les Corts Catalanes 585, 08007 Barcelona, Spain.}
\email{bayer@ub.edu}

\author[I. Blanco-Chac\'{o}n]{Iv\'{a}n Blanco-Chac\'{o}n$^{**}$}
\thanks{$^{**}$Partially supported by Academy of Finland grant 131745}
\address{Department of Mathematics and Systems Analysis, Aalto University,~Otakaari 1 M, Espoo FI-00076, Finland.}
\email{ivnblanco@gmail.com}

\author[A.~F.~Boix]{Alberto F.~Boix$^{***}$}
\thanks{$^{***}$Partially supported by MTM2010-20279-C02-01}
\address{Department of Economics and Business, Universitat Pompeu Fabra, Jaume I Building, Ramon Trias Fargas 25-27, 08005 Barcelona, Spain.}
\email{alberto.fernandezb@upf.edu}

\keywords{Quadratic modular symbols, Shimura curves, Spectral sequences.}

\subjclass[2010]{Primary 11F67, 14G35; Secondary 30F35}

\begin{document}

\begin{abstract}
Bayer and Blanco-Chac\'on have recently defined quadratic modular symbols for the Shimura curves $X(D,N)$ attached to Eichler orders of level $N$ of an indefinite quaternion $\Q$-algebra of discriminant $D$. In this paper, we give a cohomological interpretation of these quadratic modular symbols.
Explicit computations for the homology of some Shimura curves are also provided.
\end{abstract}

\maketitle

\section*{Introduction}
In \cite{blancobayer}, Bayer and Blanco-Chac\'{o}n have defined a $p$-adic distribution attached to an elliptic curve $E$ defined over $\mathbb{Q}$ and a quadratic imaginary point of the upper half-plane.
This $p$-adic distribution takes values in an infinite-dimensional $\mathbb{Q}_p$-Banach space and the corresponding Mellin-Mazur transform produces algebraic points on $E$ in the spirit of \cite{darmon2}.
This construction has been recently transferred to the setting of quaternionic Shimura curves $X(D,N)$ attached to  Eichler orders $\mathcal{O}(D,N)$ of level $N$ in the rational quaternion algebra of discriminant $D>1$.
A definition of $p$-adic $L$-functions attached to these curves, based on quadratic imaginary points,
has been proposed in \cite{bayerblanco2}.
The fact that the Hecke algebra does not act on the set of $\Gamma(D,N)$-orbits of quadratic imaginary points implies that the $p$-adic measures take values on $p$-adic Banach space of countable dimension, as in the elliptic curve setting.
\vskip 2mm
In the present paper, we carry out a cohomological study of these quadratic modular symbols.
The main tool for that purpose will be a systematic use of Grothendieck spectral sequences.
\vskip 2mm
Now, we provide an overview of the contents of this paper.
\vskip 2mm
In section \ref{descomponiendo caminos}, we design an algorithm to decompose the homology classes
that can be applied to the 73 arithmetic Fuchsian groups of signature $(1,e)$, classified by \,Takeuchi in \cite{tak},
as well as a second algorithm, based on the use of quadratic points,
suitable for the classical modular case.
\vskip 2mm
In section \ref{la famosa interpretacion cohomologica}, we recall with some detail the isomorphism of Ash and Stevens (\cite{ashstevens}) which allows to describe the cohomology with compact support of the modular curve in terms of classical modular symbols.
Finally, we develop a similar method to characterize the space of quadratic modular symbols under a cohomological point of view.

\section*{General notations and conventions}
Throughout the paper, $\mathcal{H}$ will denote the complex upper half-plane and $\mathcal{H}^*=\mathcal{H}\cup\mathbb{P}^1(\Q)$.
We refer to \cite{alsinabayer} for unexplained facts and terminology about quaternion algebras and quaternion orders.
We refer to \cite{katok} for unexplained facts and terminology about Fuchsian groups and hyperbolic geometry.

\section{Decomposition of closed paths}\label{descomponiendo caminos}

Let $H$ be an indefinite quaternion $\mathbb{Q}$-algebra of discriminant $D$. Choose a maximal order $\mathcal{O}$, which is unique up to conjugation.
Denote by $\mathcal{O}_1$ the group of elements of reduced norm 1 in $\mathcal{O}$ and
fix an embedding $\psi$ of $H$ in $\mathrm{M}(2,\mathbb{R})$ (see \cite{alsinabayer}).
Let $\Gamma_H=\psi(\mathcal{O}_1)$.
We denote by $\Gamma$ an arithmetic Fuchsian group of the first kind commensurable with $\Gamma_H$, for some $H$.
Let $X(\Gamma)$ be the corresponding projective Shimura curve.
For $D>1$, $\Gamma$ is cocompact, while for $D=1$ this is not the case and $X(\Gamma)$ is a modular curve.
The homology group $H_1(X(\Gamma)(\C),\R)$ contains the maximal lattice $H_1(X(\Gamma)(\C),\Z)$, whose elements are classes of closed paths in $X(\Gamma)(\C)$.

\begin{theorem}[cf.\,\cite{manin} for $D=1$, \cite{bayerblanco2} for $D>1$]
Denote by $E_{\Gamma}$ and $P_{\Gamma}$ the sets of elliptic and parabolic elements of $\Gamma$, respectively. Let $\Gamma'$ be the commutator subgroup of $\Gamma$. Let $\alpha\in\mathcal{H}$ if $\Gamma$ is cocompact or $\alpha\in\mathcal{H}^*$, otherwise. For any $g\in\Gamma$, define $\phi_{\alpha}(g):=\{\alpha,g(\alpha)\}_{\Gamma}\in H_1(X(\Gamma)(\mathbb{C}),\mathbb{Z})$.

\begin{enumerate}[(i)]

\item If $\Gamma$ is not cocompact then, for any $\alpha\in\mathcal{H}^*$, there is a short exact sequence of groups
\[
\xymatrix{0\ar[r]& \Gamma'E_{\Gamma}P_{\Gamma}\ar[r]& \Gamma\ar[r]^-{\phi_{\alpha}}& H_1(X(\Gamma)(\mathbb{C}),\mathbb{Z})\ar[r]& 0}.
\]

\item If $\Gamma$ is cocompact then, for any $\alpha\in\mathcal{H}$, there is a short exact sequence of groups
\[
\xymatrix{0\ar[r]& \Gamma'E_{\Gamma}\ar[r]& \Gamma\ar[r]^-{\phi_{\alpha}}& H_1(X(\Gamma)(\mathbb{C}),\mathbb{Z})\ar[r]& 0}.
\]

\end{enumerate}
In both cases, the map $\phi:=\phi_{\alpha}$ is independent of $\alpha$.
\label{key1}
\end{theorem}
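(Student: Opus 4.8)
The plan is to realize $\phi_\alpha$ as the composite of the orbifold uniformization map $\Gamma\to\pi_1(X(\Gamma)(\mathbb{C}))$ with the Hurewicz homomorphism, and then to compute its kernel. Write $\Sigma=X(\Gamma)(\mathbb{C})$ and let $\pi\colon\mathcal{H}^{\star}\to\Sigma$ be the quotient map, where $\mathcal{H}^{\star}=\mathcal{H}$ if $\Gamma$ is cocompact and $\mathcal{H}^{\star}=\mathcal{H}^{*}$ otherwise; fix a base point $\alpha$ as in the statement and set $\bar\alpha=\pi(\alpha)$. First I would dispose of the formal properties of $\phi_{\alpha}$. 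Since $\mathcal{H}^{\star}$ is contractible, the geodesic segment $\widetilde{c}_{g}$ from $\alpha$ to $g\alpha$ is unique up to homotopy rel endpoints, so $c_{g}:=\pi\circ\widetilde{c}_{g}$ is a well-defined loop at $\bar\alpha$ and $\phi_{\alpha}(g)=[c_{g}]\in H_{1}(\Sigma,\mathbb{Z})$; from $\widetilde{c}_{gh}\simeq\widetilde{c}_{g}\ast(g\cdot\widetilde{c}_{h})$ and $\pi\circ g=\pi$ one gets $c_{gh}\simeq c_{g}\ast c_{h}$, whence $\phi_{\alpha}$ is a homomorphism to an abelian group. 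Independence of the base point is the ``quadrilateral'' argument: the loop $\alpha\to\beta\to g\beta\to g\alpha\to\alpha$ is null-homotopic in $\mathcal{H}^{\star}$, hence null-homologous in $\Sigma$, and since $\{g\alpha,g\beta\}_{\Gamma}=\{\alpha,\beta\}_{\Gamma}$ (apply $g$), this forces $\{\beta,g\beta\}_{\Gamma}=\{\alpha,g\alpha\}_{\Gamma}$; in particular it does not matter whether $\alpha$ lies in $\mathcal{H}$ or at a cusp.

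Next I would prove the easy inclusion $\Gamma'E_{\Gamma}P_{\Gamma}\subseteq\ker\phi$. As the target is abelian, $\Gamma'\subseteq\ker\phi$. If $e\in E_{\Gamma}$ fixes $z_{0}\in\mathcal{H}$, then, choosing $z_{0}$ as base point, $\phi(e)=\{z_{0},ez_{0}\}_{\Gamma}=\{z_{0},z_{0}\}_{\Gamma}=0$; likewise, in the non-cocompact case, a parabolic $p$ fixing a cusp $s$ gives $\phi(p)=\{s,ps\}_{\Gamma}=0$. Since $geg^{-1}=[g,e]\,e$, the subgroup generated by $\Gamma'\cup E_{\Gamma}\cup P_{\Gamma}$ equals $\Gamma'\cdot\langle\langle E_{\Gamma}\cup P_{\Gamma}\rangle\rangle$ (normal closure), so this already yields $\Gamma'E_{\Gamma}P_{\Gamma}\subseteq\ker\phi$.

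The heart of the matter, and the step I expect to be the main obstacle, is surjectivity together with the reverse inclusion $\ker\phi\subseteq\Gamma'E_{\Gamma}P_{\Gamma}$. Here I would invoke orbifold covering space theory: $[\mathcal{H}/\Gamma]$ has underlying topological space $\Sigma$ (with the cusps filled in, in the non-cocompact case) and orbifold fundamental group $\Gamma$, and the canonical map $\Gamma=\pi_{1}^{\mathrm{orb}}([\mathcal{H}/\Gamma],\bar\alpha)\to\pi_{1}(\Sigma,\bar\alpha)$ is surjective with kernel the normal closure $\langle\langle E_{\Gamma}\cup P_{\Gamma}\rangle\rangle$, because filling a cone point of order $n$ kills the elliptic element generating its stabilizer and filling a cusp kills the parabolic generator of its stabilizer. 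Unwinding the definitions, this map sends $g$ to $[\pi\circ\widetilde{c}_{g}]$, so composing with the Hurewicz map $\pi_{1}(\Sigma,\bar\alpha)\to H_{1}(\Sigma,\mathbb{Z})$ recovers $\phi_{\alpha}$; hence $\phi_{\alpha}$ is surjective and $\ker\phi_{\alpha}=\Gamma'\cdot\langle\langle E_{\Gamma}\cup P_{\Gamma}\rangle\rangle=\Gamma'E_{\Gamma}P_{\Gamma}$, and in the cocompact case the same argument applies with $P_{\Gamma}$ absent. The delicate point is the treatment of the cusps, which are not orbifold points of the usual kind and whose handling requires passing to the compactification $\Sigma=\Gamma\backslash\mathcal{H}^{*}$; to keep things elementary one may instead build an explicit CW structure on $\Sigma$ from a fundamental polygon for $\Gamma$ (Poincar\'e's theorem) and compute $\pi_{1}(\Sigma)$ directly, which for $D=1$ reproduces Manin's argument in \cite{manin} and makes transparent why the statement concerns $H_{1}$ of the \emph{compactified} curve.
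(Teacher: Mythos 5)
The paper does not prove Theorem \ref{key1} at all: it is quoted from Manin (for $D=1$) and from the second reference (for $D>1$), so there is no in-paper argument to compare yours against. Judged on its own, your sketch is the standard proof (essentially Manin's) and the outline is sound: $\phi_{\alpha}$ is the composite of the natural surjection $\Gamma\to\pi_1(X(\Gamma)(\mathbb{C}),\bar\alpha)$ with the Hurewicz map, the quadrilateral argument gives independence of the base point, and the kernel computation reduces to the classical fact (Poincar\'e's polygon theorem, or Armstrong's theorem on orbit spaces, together with the compactification at the cusps) that the kernel of $\Gamma\to\pi_1(X(\Gamma)(\mathbb{C}))$ is the normal closure of the elliptic and parabolic elements. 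Three points deserve explicit care in a full write-up. First, $\Gamma'E_{\Gamma}P_{\Gamma}$ must be read as the subgroup generated by these sets; your remark that $geg^{-1}=[g,e]\,e$ and that $E_{\Gamma},P_{\Gamma}$ are conjugation-invariant is exactly what makes $\Gamma'\langle E_{\Gamma}\rangle\langle P_{\Gamma}\rangle$ a subgroup, and that subgroup is what the theorem calls the kernel. Second, surjectivity of $\Gamma\to\pi_1$ by path lifting needs the observation that $\mathcal{H}^{\star}\to X(\Gamma)(\mathbb{C})$ fails to be a covering only over finitely many points (elliptic points and cusps), so a loop can be homotoped off these and then lifted. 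Third, to pass from ``the distinguished cone-point and cusp generators die'' to ``all of $E_{\Gamma}\cup P_{\Gamma}$ dies and nothing more,'' you should note that every elliptic (resp.\ parabolic) element of a Fuchsian group of the first kind is conjugate to a power of one of the finitely many standard vertex (resp.\ cusp) stabilizer generators. With these caveats your argument is complete and agrees with the proofs in the cited sources.
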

From now on, we denote by $L$ the kernel of the map $\phi$.

\begin{remark}
Apart from theorem \ref{key1}, and from the fact that $H_1(X(\Gamma)(\mathbb{C}),\mathbb{Z})\simeq\mathbb{Z}^{2g}$,
being $g$ the genus of $X(\Gamma)$, saying something more precise about the structure of the homology of $X(\Gamma)(\mathbb{C})$ seems a difficult task. In some cases, it is possible to find a presentation of the group $\Gamma$ consisting of a set of matrices not belonging to $L$ together with some relations involving certain commutators (see \cite{tak}).
In \cite{chuman}, an algorithm is given to find a presentation for congruence groups,
and in \cite{alsinabayer} presentations and fundamental domains are provided for some arithmetic Fuchsian groups in the cocompact case.
Here, one of the problems we shall tackle is the decomposition of homology classes in terms of the generators of $\Gamma$ under the assumption that a system of generators is given.
\vskip 2mm
We emphasize that finding fundamental domains and generators is a different (and a non-equivalent) problem to decomposing homology classes which, on the other hand,
is equivalent to reducing points in the upper half-plane to a given fundamental domain.
\end{remark}

Fix $\tau\in\mathcal{H}^*$ ($\tau\in\mathcal{H}$, if $\Gamma$ is cocompact). By virtue of theorem \ref{key1} and the above remark, given $\omega\in H_1(X(\Gamma)(\mathbb{C}),\mathbb{Z})$, there exists $g\in\Gamma$ such that $\omega=\{\tau,g(\tau)\}_{\Gamma}$. In the modular case, in addition to theorem \ref{key1},
we can use Manin's continued fraction trick, which allows to decompose $\omega$ as a $\mathbb{Z}$-linear combination of a family of non-closed paths, namely, the Manin distinguished classes (see \cite{manin}).
\vskip 2mm
We look for an algorithm which decomposes a given $g\in \Gamma$ as a product of elements in a fixed set of generators of $\Gamma$, so that we can express $\omega$ as a $\mathbb{Z}$-linear combination of certain distinguished closed paths.
We develop such algorithm for the finite family of all the arithmetic Fuchsian groups of signature $(1;e)$.
\vskip 2mm
Additionally, we provide an algorithm which decomposes matrices of the modular group $\SL(2,\mathbb{Z})$ as products of powers of the generators $S=\begin{bmatrix}0 &-1\\1 &0\end{bmatrix}$ and $T=\begin{bmatrix}1 &1\\0 &1\end{bmatrix}$.
Such algorithm is different from the classical one, which uses the euclidean algorithm (see \cite{diamond}, for instance).
Our algorithm resembles Manin's continued fraction trick, although it is based on quadratic imaginary points, instead of the cusp $i\infty$.
It also yields as output a decomposition of $\omega$ as a $\mathbb{Z}$-linear combination of non-closed paths.

\subsection{Arithmetic Fuchsian groups of signature $(1;e)$}
We recall that all the fundamental domains for $\Gamma$ have the same number of non-accidental elliptic cycles.

\begin{definition}
The \emph{signature} of $\Gamma$ is the $(r+1)$-tuple $(g;e_1,\dots,e_r)$, where $g$ is the genus of $X(\Gamma)$, $r$ is the number of non-equivalent non-accidental elliptic cycles, and for any elliptic cycle $\mathcal{E}_k$, $e_k$ is the integer such that the sum of the angles at the vertices of $\mathcal{E}_k$ equals $2\pi/e_k$.
\end{definition}
For $g=\begin{bmatrix}a & b\\c & d\end{bmatrix}\in\Gamma$, if $g$ is not a homothety then denote by $I(g)$ the isometry circle of $g$, namely, the set $\{z\in\mathcal{H}\mid\quad\mid cz+d\mid =1\}$.
If $g$ is a homothety of factor $\lambda$,
then define $I(g)=\{z\in\mathcal{H}\mid\quad\mid\lambda z\mid =1\}$.
Denote by $\Ext(I(g))$ the exterior of $I(g)$ and by $\Int(I(g))$ the complement of $\Ext(I(g))$.
For any $\lambda\in\mathbb{R}$, $\lambda>0$, denote
\[
S(\lambda)=\{z\in\mathcal{H}\mid\,\,\lambda^{-1}\leq\mid z\mid\leq\lambda\}.
\]
If $h$ is a homothety of factor $\lambda$, then notice that the isometry circles of $h$ and $h^{-1}$ are parallel in the hyperbolic metric. Sometimes (cf.\,\cite{alsinabayer}), it is possible to find a system of generators $G$ of $\Gamma$ such that one of them is an hyperbolic homothety $h$ of factor $\lambda$ and a fundamental domain of the form
\[
\mathcal{F}=\bigcap_{g\in G\setminus\{h,h^{-1}\}}\Ext(I(g))\cap S(\lambda).
\]
We shall call $S(\lambda)$ the \emph{fundamental strip} of $\mathcal{F}$. We can construct such a fundamental domain, for instance, when $\Gamma$ is one of the 73 arithmetic Fuchsian groups of signature $(1;e)$, which were classified by Takeuchi in \cite{tak}.
These arithmetic Fuchsian groups admit a presentation of the form $\Gamma=\langle \alpha,\beta:(\alpha\beta\alpha^{-1}\beta^{-1})^e=\pm 1\rangle$, where $\alpha,\beta$ are hyperbolic elements.

\begin{proposition}[Sijsling, \cite{sij}]
Let $\Gamma$ be a cocompact arithmetic Fuchsian group of signature $(1;e)$ generated by $\alpha$ and $\beta$. Then, after a change of variables, we can suppose that $\alpha$ is a homothety of factor $\lambda$ and $\beta=\begin{bmatrix}a & b\\-b & a\end{bmatrix}$. Furthermore, the hyperbolic rectangle $\mathcal{F}=S(\lambda)\cap\Ext(I(\beta))\cap\Ext(I(\beta^{-1}))$ is a fundamental domain for $\Gamma$.
\label{generatorsTak}
\end{proposition}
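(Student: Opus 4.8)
The plan is to prove the two assertions of the proposition separately: the existence of the stated normal form for the generating pair, and then the identification of $\mathcal{F}$ as a fundamental domain via Poincar\'e's polygon theorem. Throughout I will use the presentation recalled above, so that $\alpha\beta\alpha^{-1}\beta^{-1}$ represents the elliptic element of order $e$ and, by Gauss--Bonnet, $X(\Gamma)$ has hyperbolic area $2\pi(1-1/e)$.

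For the normal form, note that $\alpha$ is hyperbolic, hence fixes two distinct points of $\mathbb{P}^1(\R)$; conjugating $\Gamma$ in $\PSL(2,\R)$ by the transformation carrying the attracting fixed point to $\infty$ and the repelling one to $0$, and replacing $\alpha$ by $\alpha^{-1}$ if necessary (which leaves $\Gamma=\langle\alpha,\beta\rangle$ unchanged), we may assume $\alpha=\begin{bmatrix}\lambda & 0\\0 & \lambda^{-1}\end{bmatrix}$ with $\lambda>1$, a homothety of factor $\lambda$, so that $S(\lambda)$ is a fundamental domain for $\langle\alpha\rangle$. The residual freedom preserving this shape is conjugation by the diagonal torus together with $w=\begin{bmatrix}0 & 1\\-1 & 0\end{bmatrix}$. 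Writing $\beta=\begin{bmatrix}p & q\\r & s\end{bmatrix}$ with $ps-qr=1$, one observes that $q,r\neq 0$ (otherwise $\alpha$ and $\beta$ would share a fixed point and $\langle\alpha,\beta\rangle$ could not be a cocompact group of signature $(1;e)$). Conjugating $\beta$ by a diagonal matrix scales $q$ and $r$ by reciprocal squares, and a direct computation gives
\[
\operatorname{tr}(\alpha\beta\alpha^{-1}\beta^{-1})=2-qr\,(\lambda-\lambda^{-1})^{2},
\]
so ellipticity of the commutator ($|\operatorname{tr}|<2$) fixes the sign of $qr$ and lets one bring the off-diagonal entries of $\beta$ to the relation in the statement. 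The equality of the diagonal entries then follows from the residual $w$-symmetry together with the explicit data of the $73$ groups in Takeuchi's list \cite{tak} --- a finite verification, reflecting a reflection symmetry of the orbifold $X(\Gamma)$ --- and I expect this to be the only part of the normalization that is not purely formal.

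For the fundamental-domain statement, with $\alpha,\beta$ in normal form, $S(\lambda)$ is bounded by the concentric geodesics $I(\alpha),I(\alpha^{-1})$, which $\alpha$ interchanges, while $I(\beta)$ and $I(\beta^{-1})$ are symmetric about the imaginary axis and interchanged by $\beta$; excising $\Int(I(\beta))\cup\Int(I(\beta^{-1}))$ from $S(\lambda)$ removes precisely its two non-compact ends, leaving the compact hyperbolic quadrilateral $\mathcal{F}$ whose four sides fall into two pairs interchanged by $\alpha$ and by $\beta$. Applying Poincar\'e's polygon theorem (see \cite{katok}), one concludes that $\Gamma_0:=\langle\alpha,\beta\rangle$ acts properly discontinuously with $\mathcal{F}$ as a fundamental domain, and that its relations relative to $\alpha,\beta$ are generated by the vertex-cycle relation; the angle sum $2\pi/e$ around the (single) vertex cycle identifies this relation with $(\alpha\beta\alpha^{-1}\beta^{-1})^{e}=1$, and since $\Gamma_0=\Gamma$, $\mathcal{F}$ is a fundamental domain for $\Gamma$. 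Alternatively one may avoid tracking the vertex cycle: Poincar\'e's theorem already exhibits $\mathcal{F}$ as a fundamental domain for the side-pairing subgroup $\Gamma_0\le\Gamma$, and then the area comparison $\operatorname{area}(\mathcal{F})=2\pi(1-1/e)=\operatorname{area}(X(\Gamma))$ forces $[\Gamma:\Gamma_0]=1$.

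The main obstacle is verifying the hypotheses of Poincar\'e's polygon theorem, namely that the four isometry circles are in the configuration making $\mathcal{F}$ a genuine compact quadrilateral, that its vertices form a single cycle, and that the associated cycle transformation has order exactly $e$. A uniform treatment requires exploiting the trace relations specific to arithmetic $(1;e)$ groups; failing that, as in \cite{sij}, it reduces to a finite case check against Takeuchi's tables. The remaining arguments are routine hyperbolic geometry.
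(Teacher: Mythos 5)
First, a point of comparison: the paper does not prove this proposition at all --- it is imported verbatim from Sijsling \cite{sij} --- so there is no internal argument to measure your sketch against; I am judging it on its own terms. Your overall outline (normalize $\alpha$ to a diagonal homothety, then exhibit $\mathcal{F}$ as a Ford/Poincar\'e polygon for the side pairings $\alpha,\beta$) is the natural one and is essentially how the cited literature proceeds. But there is a concrete incoherence in your normalization step. Your trace identity $\operatorname{tr}(\alpha\beta\alpha^{-1}\beta^{-1})=2-qr(\lambda-\lambda^{-1})^2$ is correct, and ellipticity of the commutator forces $qr>0$. The target form $\beta=\begin{bmatrix}a&b\\-b&a\end{bmatrix}$ has $qr=-b^2<0$, and worse, its determinant is $a^2+b^2$, so as an element of $\SL(2,\R)$ it is a rotation fixing $i$ --- it cannot be a hyperbolic generator of a $(1;e)$ group. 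So the step ``ellipticity \dots lets one bring the off-diagonal entries of $\beta$ to the relation in the statement'' cannot be carried out as written: your own computation shows the reachable normal form is $\begin{bmatrix}a&b\\b&a\end{bmatrix}$ (consistent with the paper's example $\beta=\begin{bmatrix}\sqrt{2}&1\\1&\sqrt{2}\end{bmatrix}$, whose fixed points $\pm1$ and isometry circles are symmetric about the imaginary axis). The statement evidently carries a sign typo; you should have detected and flagged it rather than asserting agreement with it.

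Two further gaps. (a) Equality of the diagonal entries of $\beta$ is not achievable by the residual freedom you invoke: diagonal conjugation leaves $p,s$ untouched, and conjugation by $w$ merely swaps them while replacing $\alpha$ by $\alpha^{-1}$. Since $p=s$ is equivalent to the trace relation $\operatorname{tr}(\alpha\beta)=\tfrac12\operatorname{tr}(\alpha)\operatorname{tr}(\beta)$, it must be arranged by a Nielsen move on the generating pair, and showing such a pair exists for each of the $73$ groups is precisely the nontrivial content you defer to ``a finite verification''; as stated, that part of your argument proves nothing. (b) In the Poincar\'e step, your ``alternative'' area comparison is redundant --- the side-pairing group is $\langle\alpha,\beta\rangle$, which is $\Gamma$ by hypothesis, so Poincar\'e's theorem already gives the conclusion once its hypotheses hold --- but those hypotheses (the four isometry circles bounding a compact quadrilateral, a single vertex cycle with angle sum $2\pi/e$) are exactly the substance of the proposition, and you defer them as well. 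In short, the skeleton is right and the trace computation is a genuine contribution, but both halves of the proof bottom out in deferred verifications, and the normalization step as written contradicts the (mis)stated normal form.
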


Let $\Gamma$ be a cocompact arithmetic Fuchsian group of signature $(1;e)$ generated by $\alpha,\beta$. Given $\omega\in H_1(X(\Gamma)(\mathbb{C}),\mathbb{Z})$, by theorem \ref{key1}, we know that, for any $\tau\in\mathcal{H}$, there exists $g\in\Gamma$ such that $\omega=\{\tau,g(\tau)\}_{\Gamma}$. Since quadratic imaginary points contained in $\mathcal{H}$ are dense in $\mathcal{H}$, we may suppose, without loss of generality, that $\tau$ is a quadratic imaginary point contained in the interior of $\mathcal{F}$.
Then the paths $\omega_{\alpha}=\{\tau,\alpha(\tau)\}_{\Gamma}$ and $\omega_{\beta}=\{\tau,\beta(\tau)\}_{\Gamma}$ form a $\mathbb{Z}$-basis of $H_1(X(\Gamma)(\mathbb{C}),\mathbb{Z})$.
\vskip 2mm
Our aim is to decompose explicitly $\omega$ as $n_{\alpha}\omega_{\alpha}+n_{\beta}\omega_{\beta}$,
with $n_{\alpha},n_{\beta}\in\mathbb{Z}$.
For doing this, it is enough to express $g$ as a product of powers of $\alpha$ and $\beta$.
The idea is to multiply $g$ by the left by a suitable sequence of matrices $\{g_{k_j}\}$, with $g_{k_j}$ a power of $\alpha$ or $\beta$,
to obtain a product $g_{k_1}\cdots g_{k_n}g$, such that $g_{k_1}\cdots g_{k_n}g(\tau)$ belongs to the interior of $\mathcal{F}$. In this case, $g=(g_{k_1}g_{k_2}\cdots g_{k_n})^{-1}$. Observe that the decomposition of $g$ as a product of generators is not unique in general.

\begin{lemma}
Suppose that $\lambda\geq 1$. Then, for any $z\in\mathcal{H}$ there exists an integer $N$ such that $\lambda^{-1}\leq |\alpha^N(z)|\leq \lambda$.
\label{optimal}
\end{lemma}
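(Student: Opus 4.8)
The plan is to observe that $\alpha$, being a homothety of factor $\lambda$, acts on $\mathcal{H}$ by $z \mapsto \lambda^2 z$ (the isometry circle having been defined via $|\lambda z| = 1$, so in the matrix normalization $\alpha = \begin{bmatrix}\lambda & 0 \\ 0 & \lambda^{-1}\end{bmatrix}$ and the M\"obius action multiplies by $\lambda^2$). Consequently, for any $z \in \mathcal{H}$, the quantity $|\alpha^N(z)| = \lambda^{2N}|z|$ runs, as $N$ ranges over $\mathbb{Z}$, through the geometric progression $\{\lambda^{2N}|z|\}_{N\in\mathbb{Z}}$. Since we assume $\lambda \geq 1$, and in fact $\lambda > 1$ (a homothety of factor $1$ being the identity, which would not serve as a generator), the ratio $\lambda^2 > 1$, so this progression is unbounded above and has infimum $0$; hence it meets every interval of the form $[\lambda^{-1}, \lambda]$, which is nonempty and has positive length since $\lambda > 1$.

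The key step is therefore just a counting argument: taking logarithms, we want an integer $N$ with $-\log\lambda \leq 2N\log\lambda + \log|z| \leq \log\lambda$, i.e.\ with
\[
N \in \left[ \frac{-\log|z| - \log\lambda}{2\log\lambda},\ \frac{-\log|z| + \log\lambda}{2\log\lambda} \right].
\]
This interval has length exactly $\frac{1}{\log\lambda} \geq 1$ (using $\lambda \leq e$... actually just length $1$ when one is careful: its length is $\dfrac{2\log\lambda}{2\log\lambda} = 1$), so it contains an integer. That integer $N$ is the one we want. One should take the closed interval $[\lambda^{-1}, \lambda]$ so that the endpoints are allowed, matching the closed condition in the statement; the argument gives an $N$ lying in a closed interval of length $1$, which always contains at least one integer.

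I do not anticipate a serious obstacle here; the only point requiring a little care is pinning down precisely how $\alpha$ acts (factor $\lambda$ on the matrix versus $\lambda^2$ on the M\"obius action, and the associated normalization of the isometry circle) so that the inequality $\lambda^{-1} \le |\alpha^N(z)| \le \lambda$ comes out with the stated constants rather than their squares. Once the action is fixed, the statement is the elementary fact that a half-open or closed real interval of length $\geq 1$ contains an integer, applied to the logarithmic coordinate in which $\alpha$ acts by translation. The hypothesis $\lambda \geq 1$ is exactly what guarantees the target interval $[\lambda^{-1},\lambda]$ is nonempty and that the translation length $2\log\lambda$ is positive, so the argument is vacuous-free precisely in the regime claimed.
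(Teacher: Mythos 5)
Your proof is correct and is essentially the paper's argument in logarithmic coordinates: both exploit that the multiplicative width $\lambda^2$ of the target annulus $\lambda^{-1}\leq |w|\leq\lambda$ equals the ratio of the geometric progression $|\alpha^N(z)|=\lambda^{2N}|z|$, the paper via a minimal-$N$-plus-contradiction argument and you via the fact that a closed real interval of length $1$ contains an integer. Your side remark that one really needs $\lambda>1$ (the case $\lambda=1$ being degenerate) applies equally to the paper's own proof.
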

\begin{proof}
If $|z|>\lambda$, then take
\[
N:=\min\{s\in\mathbb{N}\mid\; |\lambda^{-2s}z|\leq\lambda\}.
\]
Since $\alpha^{-N}(z)=\lambda^{-2N}z$, if $\lambda^{-1}\leq|\lambda^{-2N}z|$,
then we are done.
Otherwise, if $\lambda^{-1}>|\lambda^{-2N}z|$, then it would follow, multiplying by $\lambda^2$, that
\[
\lambda >\lambda^2 \cdot |\lambda^{-2N}z|=|\lambda^{-2(N-1)}z|,
\]
but this is a contradiction taking into account the choice of $N$.
The case when $|z|\leq \lambda^{-1}$ is analogue.
\end{proof}

Next we define the sets of transformations
$$
\begin{array}{ll}
\Gamma^{+} &=\{\beta,\beta\alpha,\beta\alpha^{-1},\beta\alpha\beta^{-1},\beta\alpha^{-1}\beta^{-1}\},
\\
&\\
\Gamma^{-} &=\{\beta^{-1},\beta^{-1}\alpha,\beta^{-1}\alpha^{-1},\beta^{-1}\alpha\beta^{-1},\beta^{-1}
\alpha^{-1}\beta^{-1}\}.
\end{array}
$$
They will play an important role in our algorithm.

\begin{example}
Figure \ref{dominio fundamental} shows a fundamental domain for the curve of signature $(1;2)$ labeled $e2d1D6ii$ in \cite{sij}.
In this particular case, $\alpha=\begin{bmatrix}\frac{\sqrt{6}+\sqrt{2}}{2} & 0\\0 & \frac{\sqrt{6}-\sqrt{2}}{2}\end{bmatrix}$ and $\beta=\begin{bmatrix}\sqrt{2} & 1\\1 & \sqrt{2}\end{bmatrix}$.
Figure \ref{translates} shows a detail of the right region of the stripe $S(\lambda)$,
in which we have depicted the images of $\mathcal{F}$ in figure \ref{dominio fundamental}
under the elements of $\Gamma^{+}$.
\end{example}

\begin{figure}
\centering
\scalebox{0.5}{\includegraphics{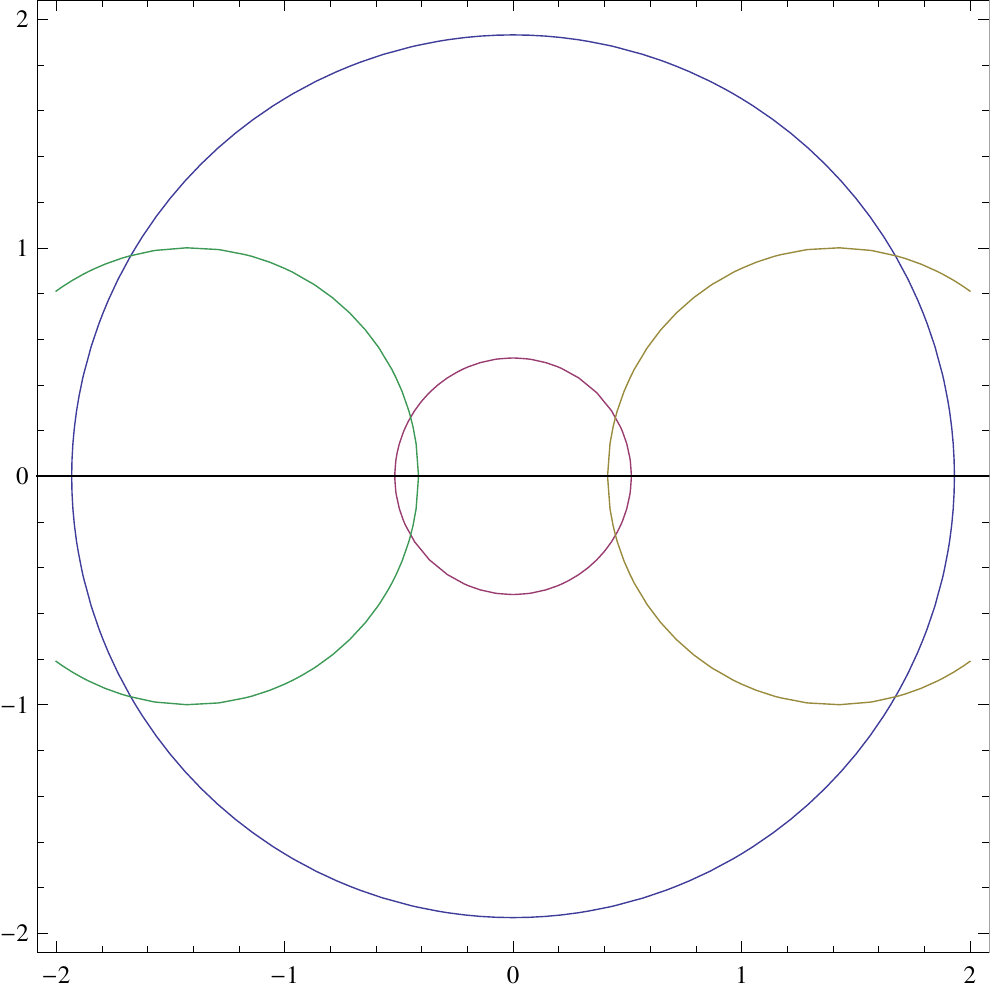}}
\caption{Fundamental domain in the upper half-plane for $\Gamma$.}
\label{dominio fundamental}
\end{figure}

\begin{figure}
\centering
\scalebox{0.5}{\includegraphics{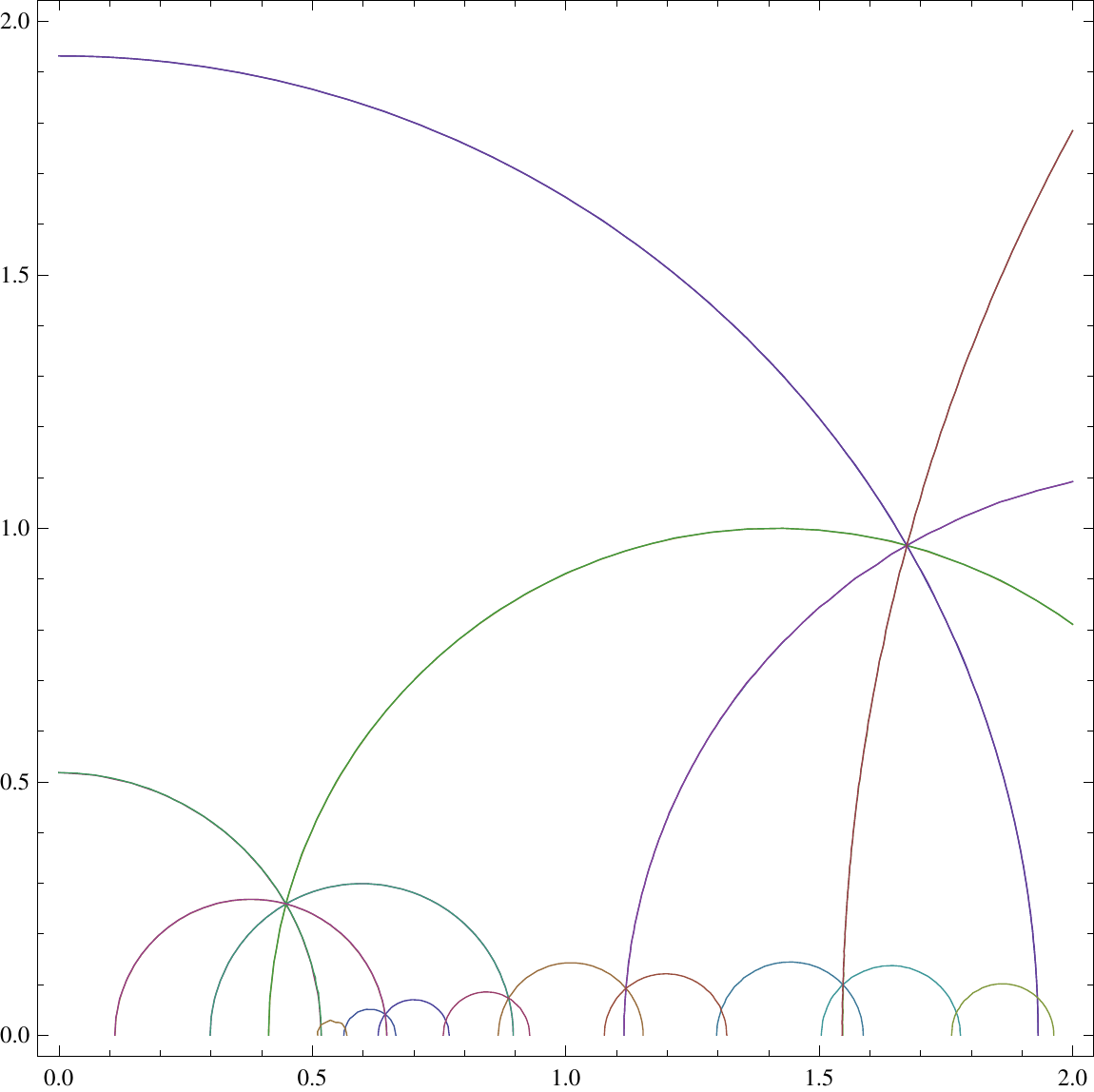}}
\caption{Right region in the case $e2d1D6ii$}
\label{translates}
\end{figure}
In general, denote by $S^{+}$ the right region of the stripe $S(\lambda)$ which excludes the interior of $\mathcal{F}$, and by $S^{-}$ the left portion of it, which is symmetric to $S^{+}$ with respect to the vertical axis.
Denote by $R^{+}$ and $R^{-}$ the regions covered by the images of $\mathcal{F}$ under iterated applications of the transformations in $\Gamma^{+}$ or $\Gamma^{-}$, respectively.
Set $$\mathcal{F}_n^+=\bigcup_{\gamma_{j_k}\in\Gamma^{+}}\gamma_{j_n}\gamma_{j_{n-1}}\cdots\gamma_{j_1}
(\mathcal{F}),$$
so that we have
\[
R^{+}=\bigcup_{n\geq 1}\mathcal{F}_n^+.
\]
The following results will also be used to prove the correctness of our algorithm.

\begin{lemma} For any $z\in S^{+}$, there exists $\gamma\in\Gamma^{+}$ such that $\im(\gamma(z))\leq \im(z)$.
\label{aux444}
\end{lemma}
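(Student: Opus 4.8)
The plan is to show that already the single transformation $\gamma=\beta$ works for every $z\in S^{+}$ (in fact with strict inequality); the full set $\Gamma^{+}$, together with the translates of $\mathcal{F}$ drawn in Figure \ref{translates}, is needed not here but afterwards, to guarantee that \emph{iterating} elements of $\Gamma^{+}$ eventually returns a point of $S^{+}$ to $\mathcal{F}$, not merely that one step lowers the imaginary part. Recall the elementary identity: for $\gamma=\begin{bmatrix}\ast&\ast\\ c&d\end{bmatrix}\in\SL(2,\R)$ and $z\in\mathcal{H}$,
\[
\im(\gamma(z))=\frac{\im(z)}{|cz+d|^{2}},
\]
so $\im(\gamma(z))\le\im(z)$ precisely when $|cz+d|\ge 1$, i.e.\ precisely when $z$ lies in the closed exterior of the isometry circle $I(\gamma)$. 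Thus it suffices to prove that $S^{+}$ does not meet the half-disk $\Int(I(\beta))$.

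For this I would work in the normal form of Proposition \ref{generatorsTak}, where $\alpha$ is a homothety and, writing the lower row of $\beta$ as $(c,a)$, the circle $I(\beta)$ is centred at $-a/c$ on the real axis while $I(\beta^{-1})$ is its mirror image in the imaginary axis (a congruent circle centred at $a/c$). Since $\beta$ is hyperbolic, $|a|=|\operatorname{tr}\beta|/2>1$; hence for $z=iy$ with $y>0$,
\[
|c\,iy+a|^{2}=a^{2}+c^{2}y^{2}\ge a^{2}>1 .
\]
This has two consequences. First, neither $\Int(I(\beta))$ nor $\Int(I(\beta^{-1}))$ meets the imaginary axis; as each of these half-disks is connected, each lies entirely in one of the open half-planes $\{\Re z<0\}$, $\{\Re z>0\}$, and, being reflections of one another, they lie on opposite sides. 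Second, for $\lambda^{-1}<y<\lambda$ we get $iy\in\Ext(I(\beta))\cap\Ext(I(\beta^{-1}))\cap S(\lambda)^{\circ}$, so the open segment $\{iy:\lambda^{-1}<y<\lambda\}$ is contained in the interior of $\mathcal{F}=S(\lambda)\cap\Ext(I(\beta))\cap\Ext(I(\beta^{-1}))$. Therefore $\mathcal{F}$ straddles the imaginary axis, and its right complementary region in $S(\lambda)$ — namely $S^{+}$ — is contained in $\{\Re z>0\}$ (just as $S^{-}\subseteq\{\Re z<0\}$).

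It only remains to identify which of the two half-disks lies to the left. By construction (this is what Figure \ref{translates} displays) the elements of $\Gamma^{+}$, in particular $\beta$, carry $\mathcal{F}$ into $S^{+}$. Since $\mathcal{F}\subseteq\Ext(I(\beta))$ and $\beta(\Ext(I(\beta)))=\Int(I(\beta^{-1}))$ (a standard property of isometry circles), the nonempty set $\beta(\mathcal{F})$ lies in $\Int(I(\beta^{-1}))\cap S^{+}$; so the connected half-disk $\Int(I(\beta^{-1}))$ meets $\{\Re z>0\}$ and is therefore contained in it, whence $\Int(I(\beta))\subseteq\{\Re z<0\}$. Combining with $S^{+}\subseteq\{\Re z>0\}$ gives $\Int(I(\beta))\cap S^{+}=\emptyset$; that is, $|cz+a|>1$ for every $z\in S^{+}$, and hence $\im(\beta(z))=\im(z)\,|cz+a|^{-2}<\im(z)$. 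Since $\beta\in\Gamma^{+}$, this establishes the lemma. The only point of the proof that deserves attention is that $S^{+}$ and $\Int(I(\beta))$ lie on opposite sides of the imaginary axis; this is immediate from the mirror symmetry of the normal form together with the hyperbolicity of $\beta$, and no quantitative estimate in $\lambda$, $a$ or $c$ is involved, so there is no real obstacle.
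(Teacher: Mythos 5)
Your argument is correct, and it in fact establishes more than the paper's own proof records. The paper's proof consists solely of the identity $\im(\gamma(z))=\im(z)/|cz+d|^{2}$ together with the remark that $|cz+d|\geq 1$ exactly when $z$ lies in the closed exterior of $I(\gamma)$; the remaining step --- that every $z\in S^{+}$ lies in the closed exterior of the isometry circle of \emph{some} element of $\Gamma^{+}$ --- is left to be read off Figure \ref{translates}. You close that gap by producing a single uniform witness, $\gamma=\beta$: in the normal form of Proposition \ref{generatorsTak} the half-disks $\Int(I(\beta))$ and $\Int(I(\beta^{-1}))$ are mirror images through the imaginary axis, and hyperbolicity of $\beta$ gives $|a|>1$, hence $a^{2}+c^{2}y^{2}>1$ on the axis, so each half-disk lies strictly inside one open half-plane and they lie on opposite sides; since $\beta(\mathcal{F})\subseteq\Int(I(\beta^{-1}))\cap S^{+}$, the disk $\Int(I(\beta))$ is the left one and is disjoint from $S^{+}\subseteq\{\operatorname{Re}(z)>0\}$, whence $\im(\beta(z))<\im(z)$ on all of $S^{+}$. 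Two remarks. First, your identification of which half-disk is the right one rests on the convention that $\beta$, rather than $\beta^{-1}$, carries $\mathcal{F}$ into $S^{+}$; this is precisely the convention the paper uses implicitly in the proof of Lemma \ref{aux3}, and at worst it is restored by exchanging $\beta$ and $\beta^{-1}$, so nothing is lost. Second, the strict inequality you obtain is a genuine bonus: the proof of Lemma \ref{aux3} needs strictness and derives it there by a separate and rather terse argument about geodesics, which your computation makes unnecessary. (You are also right to write the lower row of $\beta$ as $(c,a)$ without committing to a sign: the form $\begin{bmatrix}a & b\\ -b & a\end{bmatrix}$ with determinant $a^{2}+b^{2}=1$ would force $|a|\leq 1$ and contradict hyperbolicity, so the normal form must be read as in the example $e2d1D6ii$.)
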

\begin{proof}For $\gamma=\begin{bmatrix} a & b\\c & d\end{bmatrix}\in\Gamma^{+}$ and $z\in S^{+}$, we have
$$\im(\gamma(z))=\frac{\im(z)}{|cz+d|^2}.$$
Now, the condition $|cz+d|\geq 1$ is equivalent to say that $z$ belongs to the closure of the exterior of the isometry circle of $\gamma$.
\end{proof}

\begin{lemma}$S^{+}=R^{+}$.
\label{aux3}
\end{lemma}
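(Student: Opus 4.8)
The plan is to prove the two inclusions $R^{+}\subseteq S^{+}$ and $S^{+}\subseteq R^{+}$ separately. The first is easy: by construction each $\gamma\in\Gamma^{+}$ is a word in $\alpha,\beta$ that maps $\mathcal F$ to a tile lying in the right half of the strip outside $\mathcal F$, and the tessellation property of $\mathcal F$ guarantees that iterating transformations in $\Gamma^{+}$ never leaves $S(\lambda)$ nor re-enters the interior of $\mathcal F$; hence every $\mathcal F_n^+\subseteq S^{+}$ and therefore $R^{+}=\bigcup_{n\ge 1}\mathcal F_n^+\subseteq S^{+}$. The substance is the reverse inclusion.

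For $S^{+}\subseteq R^{+}$, I would argue by a descent on the imaginary part. Fix $z\in S^{+}$. Since $\mathcal F$ is a fundamental domain and $\Gamma=\langle\alpha,\beta\rangle$, there is some $g\in\Gamma$ with $g(z)$ in the interior of $\mathcal F$; write $g$ as a word in $\alpha^{\pm1},\beta^{\pm1}$. The idea is to reorganize this word so that it is read off as a product of elements of $\Gamma^{+}$. Concretely, I would show by induction on $n$ that if $z\in S^{+}$ and $z\notin\mathcal F_1^+\cup\cdots\cup\mathcal F_n^+$, then there exists $\gamma\in\Gamma^{+}$ with $\gamma(z)\in S^{+}$ and $\im(\gamma(z))<\im(z)$; Lemma~\ref{aux444} supplies a $\gamma$ with $\im(\gamma(z))\le\im(z)$, and one checks that equality forces $z$ to lie on an isometry circle bounding $\mathcal F$, i.e.\ already in $\overline{\mathcal F_1^+}$, so the inequality is strict off the tiles. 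Using the homothety $\alpha$ via Lemma~\ref{optimal} keeps the absolute value constrained to the fundamental strip, so we stay inside $S(\lambda)$. Since $\Gamma$ is discrete, the imaginary parts of the $\Gamma$-orbit of $z$ have no accumulation point except possibly at the cusps; in the cocompact case there are no cusps, so a strictly decreasing sequence of imaginary parts in the orbit must terminate, which means the descent halts—precisely when $z$ has been brought into $\mathcal F$ by a product of elements of $\Gamma^{+}$, i.e.\ $z\in R^{+}$.

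I would present the argument cleanly as follows: given $z\in S^{+}$, choose $g\in\Gamma$ with $g(z)\in\Int(\mathcal F)$ of minimal word length; apply Lemma~\ref{aux444} to pick $\gamma_1\in\Gamma^{+}$ with $\im(\gamma_1(z))\le\im(z)$, normalize by a power of $\alpha$ using Lemma~\ref{optimal} so that $\gamma_1(z)\in S(\lambda)$, and note that $\gamma_1(z)$ is either in $\Int(\mathcal F)$ (done) or again in $S^{+}$ with strictly smaller imaginary part; repeat. Discreteness and cocompactness of $\Gamma$ prevent an infinite strictly decreasing chain, so after finitely many steps $\gamma_k\cdots\gamma_1(z)\in\Int(\mathcal F)$, whence $z\in\mathcal F_k^+\subseteq R^{+}$.

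The main obstacle is the strictness of the descent: Lemma~\ref{aux444} only gives $\im(\gamma(z))\le\im(z)$, so one must rule out the possibility of getting stuck on a locus where the imaginary part is preserved by every available transformation. This requires a careful geometric analysis of the arrangement of the isometry circles of the elements of $\Gamma^{+}$ along the right edge of $S(\lambda)$—showing their union covers $S^{+}$ with overlaps only along the bounding arcs—so that a point fixed in imaginary part by all of $\Gamma^{+}$ must already lie on $\partial\mathcal F$ and hence in $\overline{R^{+}}$. A secondary technical point is to confirm that after each step the point genuinely lands back in $S^{+}$ (not outside the strip), which is exactly what the normalization by powers of $\alpha$ in Lemma~\ref{optimal} is designed to ensure; combining these two controls—imaginary part going down, absolute value staying in $[\lambda^{-1},\lambda]$—is the heart of the proof.
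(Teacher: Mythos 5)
Your first inclusion and your identification of the key ingredients (Lemma~\ref{aux444} plus proper discontinuity) match the paper, but the descent in your second half runs in the wrong direction, and this is a genuine gap rather than a presentational issue. The very observation that proves $R^{+}\subseteq S^{+}$ — namely that every $\gamma\in\Gamma^{+}$ maps $S^{+}$ into itself and maps $\mathcal F$ into $S^{+}$ — shows that your iteration $z\mapsto\gamma_1(z)\mapsto\gamma_2\gamma_1(z)\mapsto\cdots$ with $\gamma_i\in\Gamma^{+}$ can \emph{never} land in $\Int(\mathcal F)$: starting from $z\in S^{+}$, every iterate stays in $S^{+}$, which is disjoint from $\Int(\mathcal F)$ by definition. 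Lemma~\ref{aux444} pushes points of $S^{+}$ \emph{down}, i.e.\ deeper into the tiling and away from $\mathcal F$; to bring a point of $S^{+}$ back to $\mathcal F$ one must apply \emph{inverses} of words in $\Gamma^{+}$ (this is exactly what the correctness proof of Algorithm~\ref{hola1} does, peeling off a $\beta^{-1}$ and a power of $\alpha$ at each step). Even setting that aside, your final inference is formally wrong: $\gamma_k\cdots\gamma_1(z)\in\mathcal F$ gives $z\in\gamma_1^{-1}\cdots\gamma_k^{-1}(\mathcal F)$, and $\gamma_1^{-1}\cdots\gamma_k^{-1}$ is not a product of elements of $\Gamma^{+}$ (the set $\Gamma^{+}$ is not closed under inversion), so you cannot conclude $z\in\mathcal F_k^{+}$. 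A further problem is the termination claim: a strictly decreasing sequence of positive imaginary parts need not terminate, and discreteness does not forbid orbit points of arbitrarily small positive imaginary part (the limit set lies on $\partial\mathcal H$); what proper discontinuity actually gives is that such a sequence cannot accumulate at a positive value.

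The paper avoids all of this by running the descent on the tiles rather than on the point: it sets $h_n=\sup\{\im(z):z\in\mathcal F_n^{+}\}$, shows $h_{n+1}<h_n$ by applying Lemma~\ref{aux444} to the finitely many vertices of $\mathcal F_n^{+}$ where the supremum is attained (strictness coming from the geodesic-to-geodesic argument, the point you correctly flagged as delicate), and then uses proper discontinuity to force $h_n\to 0$; consequently every $z\in S^{+}$, having $\im(z)>0$, is already covered by a tile of some finite generation. If you want to keep a pointwise argument, you would need to reverse the iteration — repeatedly apply $\beta^{\pm 1}$ and powers of $\alpha$ to \emph{increase} the imaginary part of $z$ until it enters $\mathcal F$, recording the inverse word as a product of elements of $\Gamma^{+}$ — and then termination would follow from the boundedness of $\im$ on $S(\lambda)\cap\Ext(I(\beta))\cap\Ext(I(\beta^{-1}))$ together with proper discontinuity; as written, your argument does not establish $S^{+}\subseteq R^{+}$.
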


\begin{proof}
First of all, let us check that, for any $n\geq 1$, and for any matrices $\gamma_{j_k}\in\Gamma^+$, is $\gamma_{j_n}\gamma_{j_{n-1}}\cdots\gamma_{j_1}(\mathcal{F})\subseteq S^{+}$. To see this, it is enough to notice that the five transforms of $\Gamma^{+}$ preserve $S^{+}$ (see figure \ref{translates}).
\vskip 2mm
To check the reciprocal inclusion,
define the sequence
\[
h_n:=\sup\left\{\im(z)\mid\; z\in\mathcal{F}_n^+\right\}.
\]
We claim that $h_{n+1}<h_n$ for any $n\geq 1$. In fact, since M\"{o}bius transforms are continuous, $h_n=\im(\gamma_{j_n}\cdots\gamma_{j_1}(z))$ for some $z\in\mathcal{F}$. In addition, it is easy to check (see figure \ref{translates}) that $h_n$ is reached at a point of the boundary which is intersection of two circles. Hence, $h_{n+1}$ is also reached at a boundary point which lies in the intersection of two circles.
\vskip 2mm
Let $V_n=\{z_{n,k}\}_{k}$ be the set of boundary points of $\mathcal{F}_n^+$ which are intersections of two circles. For any $k$, the point $z_{n,k}$ cannot be fixed simultaneously by the five matrices in $\Gamma^{+}$, hence, according to lemma \ref{aux444}, there exists $\gamma\in\Gamma^{+}$ such that $\im(\gamma(z_{n, k}))\leq\im(z_{n, k})$. Since M\"{o}bius transformations map geodesics into geodesics, the inequality has to be strict, and this for all the points in the finite set $V_n$. Since $h_{n+1}=\im(\gamma(z_{n,k_0}))$ for some boundary point $z_{n,k_0}\in V_n$, $h_{n+1}$ has to be strictly smaller than $h_n$.
\vskip 2mm
In this way, since $\{h_n\}_{n\geq 0}$ is a decreasing sequence and $h_n>0$ for any $n\geq 1$, this sequence is convergent. However, if the limit were nonzero then there would exist an accumulation point in $\mathcal{H}$ of a sequence $\{\gamma_n(z)\}\subseteq\mathcal{H}$ with $\gamma_n\in\Gamma$ and $z\in\mathcal{F}$. Since $\Gamma$ acts properly discontinuously on $\mathcal{H}$, this would be a contradiction.
\vskip 2mm
Consequently, given $z\in S^{+}$, let $n\geq 1$ be such that $h_n\leq\im(z)$. Hence, there exists $\gamma\in\Gamma$
equal to a product of $n$ matrices of $\Gamma^{+}$ such that $z\in\gamma(\mathcal{F})$.
\end{proof}

Now, we can turn the foregoing facts into an algorithm as follows.
\vskip 2mm
Let $\tau\in\mathcal{H}$ and $g\in\Gamma$ be such that $g(\tau)\not\in S(\lambda)$. Define $N(g)\in\mathbb{Z}$ such that $\lambda^{-1}\leq|\alpha^{N(g)}(\tau)|\leq\lambda$. Our procedure is described in algorithm \ref{hola1}. Let us move on check its correctness.

\begin{algorithm}\caption{Decomposition into distinguished closed paths}\label{hola1}
\begin{algorithmic}
\REQUIRE $g\in\Gamma$.
\ENSURE $\{n_{\alpha},n_{\beta}\}$ such that $\{\tau,g(\tau)\}=n_{\alpha}\{\tau,\alpha(\tau)\}+n_{\beta}\{\tau,\beta(\tau)\}$.
\medskip
\STATE $\gamma\gets g,n_{\alpha}\gets 0,n_{\beta}\gets 0$;
\STATE $flag=false$.
\WHILE {$flag==false$}
\IF{$\gamma(\tau)\not\in S$}
\STATE $n_{\alpha}\gets n_{\alpha}+N(g)$.
\STATE $g\gets\alpha^{N(g)}g$.
\STATE $\gamma\gets\gamma\alpha^{-N(g)}$.
\ELSE
\IF{$\gamma(\tau)\in\mathcal{F}$}
\STATE $flag\gets true$.
\ENDIF
\ENDIF
\IF{$\gamma(\tau)\in S^{+}$}
\STATE $n_{\beta}\gets n_{\beta}+1$.
\STATE $g\gets\beta^{-1}g$.
\STATE $\gamma\gets\gamma\beta$.
\ENDIF
\IF{$\gamma(\tau)\in S^{-}$}
\STATE $n_{\beta}\gets n_{\beta}-1$.
\STATE $g\gets\beta g$.
\STATE $\gamma\gets\gamma\beta^{-1}$.
\ENDIF
\ENDWHILE
\RETURN $\{n_{\alpha},n_{\beta}\}$ such that $\{\tau,g(\tau)\}=n_{\alpha}\{\tau,\alpha(\tau)\}+n_{\beta}\{\tau,\beta(\tau)\}$.
\end{algorithmic}
\end{algorithm}

\begin{proof}[Proof of correctness]
If $g(\tau)\not\in S(\lambda)$ then left multiplication by $\alpha^m$, for some $m$, brings $g(\tau)$ to, say, $S^{+}$. Then, by lemma \ref{aux3}, we can write $\alpha^{m}g(\tau)=\gamma_{k_n}\cdots\gamma_{k_1}(\tau)$, where $\gamma_{k_j}\in\Gamma^{+}$. If $\gamma_{k_n}\neq\beta\alpha\beta^{-1},\beta\alpha^{-1}\beta^{-1}$, then we can write
\[
\alpha^v\beta^{-1}\alpha^mg(\tau)=\gamma_{k_{n-1}}\cdots\gamma_{k_1}(\tau)
\]
for some $v\in\Z$. This point belongs to $S^{+}$. If $\gamma_{k_n}=\beta\alpha\beta^{-1}$ then we distinguish two cases. If $\gamma_{k_j}\neq\beta\alpha\beta^{-1},\beta\alpha^{-1}\beta^{-1}$ for some $j\in\{1,\dots ,n-1\}$ then
\[
\beta^{-1}\alpha^v\beta^{-1}\alpha^mg(\tau)=\gamma_{k_{j-1}}\cdots\gamma_{k_1}(\tau)\in S^{+}.
\]
Otherwise, $\alpha^v\beta^{-1}\alpha^mg(\tau)=\beta^{-1}(\tau)\in S^{-}$. The algorithm does $n$ operations if $g=\gamma_{k_n}\cdots\gamma_{k_1}$.
\end{proof}

\begin{example}
For $g=\begin{bmatrix}6+3\sqrt{3} &2\sqrt{2}\\2\sqrt{2} & 6-3\sqrt{3}\end{bmatrix}$, algorithm \ref{hola1} returns
$n_{\alpha}=n_{\beta}=2$ in four iterations. Indeed, $g=\alpha\beta^2\alpha$.
\end{example}

\subsection{An alternative algorithm for the modular case.}
We develop a method which, given a matrix $g\in\SL(2,\mathbb{Z})$, gives a factorization of $g$ in terms of $S$ and $T$. There exists an explicit algorithm which uses the euclidean algorithm (cf.\,\cite{serre}), but our approach is slightly different; indeed, we compare $g$ with the elements of a sequence of products of matrices acting on the imaginary unit in such a way that this sequence can be understood as the convergents of a certain continued fraction-like expansion.
\vskip 2mm
We start with $g=\begin{bmatrix}a & b\\c & d\end{bmatrix}\in\SL(2,\mathbb{Z})$. Since $(ST)^3=Id$ and $S(i)=i$, we can suppose that
\[
g=T^{n_k}ST^{n_{k-1}}S\cdots T^{n_2}ST^{n_1},
\]
with $n_1,\dots ,n_k$ to be determined. We want to express $\omega=\{i,g(i)\}_{\Gamma_0(N)}$ as a $\mathbb{Z}$-linear combination of the form
\[
\omega=\{i,ST^{n_1}(i)\}+\sum_{j=1}^{k-1}\{T^{n_j}S\cdots T^{n_1}(i),T^{n_{j+1}}ST^{n_j}S\cdots T^{n_1}(i)\}.
\]
Define the following finite sequence:
\[
g_1(i)=n_1+i,\quad g_{m+1}(i)=n_{m+1}-\dfrac{1}{g_m(i)}.
\]
Write $g_j=\begin{bmatrix} a_j & b_j\\c_j & d_j\end{bmatrix}\in\SL(2,\mathbb{Z})$. Notice that $g_{n_k}(i)=g(i)$. At the $j$-th step, $g_j(i)$ admits an explicit expression as an element of $\mathbb{Q}(i)$, which we call the $j$-th convergent. For instance, the first four convergents are
$$
\begin{array}{ccl}
g_1(i) & =& n_1+i\\
g_2(i) & =& \dfrac{n_2i+(n_1n_2-1)}{n_1+i}\\
g_3(i) & =& \dfrac{(n_2n_3-1)i+(n_1n_2n_3-n_1-n_3)}{n_2i+(n_1n_2-1)}\\
g_4(i) & =& \dfrac{(n_2n_3n_4-n_2-n_4)i +(n_1n_2n_3n_4-n_1n_4-n_3n_4-n_1n_2+1)}{(n_2n_3-1)i+(n_1n_2n_3-n_1-n_3)}.
\end{array}
$$
For any $j=2,\dots ,k$, notice that if $|a_j|>|c_j|$, the quotient $f_j=a_j/c_j$ can be written as $f_j=n_j-A_j/B_j$, where $A_j=B_{j-1}$, $B_j=n_{j-1}B_{j-1}-A_{j-1}$. We set $B_1=0$. Hence, for any $j\geq 2$, either $A_j$ is coprime to $B_j$, or $A_j=\pm B_j$.
\vskip 2mm
Now, we are ready for introducing our second algorithm.
\vskip 2mm
Given $a,b\in\mathbb{Z}$ with $b\neq 0$, denote by $\Div(a,b)$ the quotient of the euclidean division of $a$ by $b$. We propose the following procedure.

\begin{algorithm}\caption{Decomposition of matrices}\label{hola2}
\begin{algorithmic}
\REQUIRE $g\in\SL\left(2,\mathbb{Z}\right)$.
\ENSURE $v=\{n_k,\dots ,n_1\}$ such that $g(i)=T^{n_k}S\cdots T^{n_1}(i)$.
\medskip
\STATE $\gamma\gets g$.
\STATE $v\gets\emptyset$.
\WHILE {$\gamma\neq Id,S$}
\IF{$|\gamma[1,1]|<|\gamma[2,1]|$}
\STATE $\gamma\gets S\gamma$.
\STATE $h\gets hS$.
\ELSE
\STATE $\gamma\gets T^{-\Div(\gamma[1,1],\gamma[2,1])}\gamma$.
\STATE $v\gets v\cup\{\Div\Gamma[1,1],\gamma[2,1]\}$.
\ENDIF
\ENDWHILE
\RETURN $v=\{n_k,\dots ,n_1\}$ such that $g(i)=T^{n_k}S\cdots T^{n_1}(i)$.
\end{algorithmic}
\end{algorithm}

\begin{proof}[Proof of correctness]
If $c=0$, then $g$ is a translation. If $|a|>|c|$, then the integer part of the quotient $f_k$ gives the exponent $n_k$, otherwise, the upper left entry of $Sg$ is bigger in absolute value than the lower-left entry.
\end{proof}
\begin{example}
For $g=\begin{bmatrix}3 &2\\7 & 5\end{bmatrix}$, algorithm \ref{hola2} returns the ordered vector $v=\{-2,3,1\}$ in four iterations. Indeed, $g=-ST^{-2}ST^3ST$.
\end{example}

\section{Quadratic modular symbols}\label{la famosa interpretacion cohomologica}
The aim of this section is to present the quadratic modular symbols as the cohomology of a suitable pair of topological spaces.
\vskip 2mm
As a convention in what follows, given any topological space $X$ and any $i\in\N$, we write $H_i (X)$ instead of $H_i (X,\Z)$. Similarly, for any pair $A\subseteq X$ of topological spaces one writes $H_i ((X,A))$ instead of $H_i((X,A),\Z)$.

\subsection{Ash-Stevens cohomological interpretation of the classical modular symbols}
We start by fixing some additional notation.
\vskip 2mm
Set $\Gamma :=\Gamma_0 (N)$, let $\cD:=\Div (\mathbb{P}^1 (\Q))$ be the group of divisors supported on the rational cusps $\mathbb{P}^1 (\Q)=\Q\cup\{i\infty\}$, let $\cD_0\subset\cD$ be the subgroup of divisors of degree zero, let $R$ be a commutative ring such that the order of every torsion element of $\Gamma$ is invertible in $R$ and let $E$ be an $R[\Gamma]$-module. We can identify $\Gamma\backslash \cH^*$ with $X(\Gamma)(\mathbb{C})$. Denote $Y(\Gamma)(\mathbb{C})$ for $\Gamma\backslash\mathcal{H}$. Let $\widetilde{E}$ be the local coefficient system on $X(\Gamma)(\C)$ associated to $E$.
\begin{theorem}[Ash-Stevens, \cite{ashstevens}]
For any $i\in\N$ we have the following commutative diagram with exact rows:
\[
\scalebox{0.8}{
\xymatrix{H^i \left(\left(X(\Gamma)(\mathbb{C}),X(\Gamma)(\mathbb{C})\setminus Y(\Gamma)(\mathbb{C}) \right),\widetilde{E}\right)\ar[d]\ar[r]& H^i \left(X(\Gamma)(\mathbb{C}),\widetilde{E}\right)\ar[d]\ar[r]& H^i \left(X(\Gamma)(\mathbb{C})\setminus Y(\Gamma)(\mathbb{C}),\widetilde{E}\right)\ar[d]\\
H^{i-1}(\Gamma ,\Hom_{\Z}(\cD_0 ,E))\ar[r]& H^i (\Gamma ,E)\ar[r]& H^i(\Gamma ,\Hom_{\Z}(\cD ,E))}
}
\]
where the vertical arrows are isomorphisms.
\label{ashstevenscohomology}
\end{theorem}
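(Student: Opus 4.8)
The plan is to transport both rows of the diagram into group cohomology and identify them there. By construction, the bottom row consists of three consecutive terms of the long exact sequence in $H^{\bullet}(\Gamma,-)$ attached to the short exact sequence of $R[\Gamma]$-modules $0\to E\to\Hom_{\Z}(\cD,E)\to\Hom_{\Z}(\cD_0,E)\to 0$, obtained by applying $\Hom_{\Z}(-,E)$ to the degree sequence $0\to\cD_0\to\cD\xrightarrow{\deg}\Z\to 0$ (this is exact because $\cD_0$ and $\cD$ are $\Z$-free, so $\Ext^{1}_{\Z}(\Z,E)=0$). It therefore suffices to produce a morphism from the top (geometric) long exact sequence onto this algebraic one which is an isomorphism on every term.

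To do that I would realize all the groups in sight as the $\Gamma$-hypercohomology of a single short exact sequence of complexes of $R[\Gamma]$-modules. Work with the Borel--Serre compactification: replace $\cH^{*}$ by the contractible $\Gamma$-space $\overline{\cH}$ whose $\Gamma$-stable boundary is $\partial\overline{\cH}=\bigsqcup_{x\in\mathbb{P}^1(\Q)}\ell_x$ with each $\ell_x\cong\R$ contractible and $\Gamma_x$ (the stabilizer of $x$) acting on $\ell_x$ by translations; then $\overline{Y}=\Gamma\backslash\overline{\cH}$ is compact, $\widetilde{E}$ is a genuine local system on it, and the three geometric groups in the statement are $H^{i}((\overline{Y},\partial\overline{Y}),\widetilde{E})$, $H^{i}(\overline{Y},\widetilde{E})$ and $H^{i}(\partial\overline{Y},\widetilde{E})$, i.e.\ the compact-support, ordinary and boundary cohomology of $Y(\Gamma)(\C)$. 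Put $A^{\bullet}=C^{\bullet}(\overline{\cH};E)$ and $B^{\bullet}=C^{\bullet}(\partial\overline{\cH};E)$ (cellular cochain complexes, hence complexes of $R[\Gamma]$-modules), sitting in $0\to(A/B)^{\bullet}\to A^{\bullet}\to B^{\bullet}\to 0$. Since $\overline{\cH}$ and every $\ell_x$ are contractible, the cohomology $R[\Gamma]$-modules are: $H^{q}(A^{\bullet})=E$, concentrated in degree $0$; $H^{q}(B^{\bullet})=\Hom_{\Z}(\cD,E)\cong\prod_{\text{cusps }c}\operatorname{Coind}_{\Gamma_c}^{\Gamma}E$ (finitely many cusps), concentrated in degree $0$; and, from the long exact sequence of the pair $(\overline{\cH},\partial\overline{\cH})$ combined with the degree sequence, $H^{q}((A/B)^{\bullet})=\Hom_{\Z}(\cD_0,E)$, concentrated in degree $1$.

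Now take $\Gamma$-hypercohomology of $0\to(A/B)^{\bullet}\to A^{\bullet}\to B^{\bullet}\to 0$. Computed through the cohomology $R[\Gamma]$-modules --- via the Grothendieck spectral sequence $H^{p}(\Gamma,H^{q}(-))\Rightarrow\mathbb{H}^{p+q}(\Gamma,-)$, which degenerates here because each of the three complexes has cohomology in a single degree --- the associated hypercohomology long exact sequence is the bottom row, the degree shift in the $\Hom_{\Z}(\cD_0,E)$ term being precisely that $H^{\bullet}((A/B)^{\bullet})$ lives in degree $1$. Computed through the other spectral sequence --- using that the orders of the torsion elements of $\Gamma$ are units in $R$, so that the $A^{q}$ and $B^{q}$, being products of modules coinduced from finite cell stabilizers, are acyclic for $H^{\bullet}(\Gamma,-)$ and $\mathbb{H}^{\bullet}(\Gamma,A^{\bullet})=H^{\bullet}(\overline{Y},\widetilde{E})$, and likewise for $B^{\bullet}$ and $(A/B)^{\bullet}$ --- the same long exact sequence is the top row. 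The canonical identification of these two ways of computing the hypercohomology long exact sequence of a short exact sequence of complexes is exactly the desired commutative ladder; its middle and right vertical maps are the Cartan--Leray and Shapiro comparison isomorphisms, and (since restriction to the boundary corresponds under Shapiro to the map $H^{i}(\Gamma,E)\to H^{i}(\Gamma,\Hom_{\Z}(\cD,E))$ induced by $E\hookrightarrow\Hom_{\Z}(\cD,E)$) the left vertical map, together with its degree shift, is then forced, e.g.\ by the five lemma.

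The only point requiring genuine care is the bookkeeping at the cusps. One must fix conventions so that the geometric groups really are the compact-support, ordinary and boundary cohomologies --- this is what makes $H^{i}(X(\Gamma)(\C)\setminus Y(\Gamma)(\C),\widetilde{E})$ equal $H^{i}(\Gamma,\Hom_{\Z}(\cD,E))$ rather than vanish in positive degrees --- which is why one works with $\overline{\cH}$ instead of the naive quotient $\Gamma\backslash\cH^{*}$; one must match the boundary-restriction map with the module map $E\hookrightarrow\Hom_{\Z}(\cD,E)$ via Shapiro's lemma; and one must verify the degree-one concentration of $H^{\bullet}((A/B)^{\bullet})$ that produces the shift. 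The two spectral-sequence collapses, which is where the hypothesis on the torsion of $\Gamma$ enters, are then routine.
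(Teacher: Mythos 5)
Your proposal is correct, and at its core it shares the paper's strategy: identify the bottom row with the long exact sequence of $\Gamma$-cohomology attached to $0\to E\to\Hom_{\Z}(\cD,E)\to\Hom_{\Z}(\cD_0,E)\to 0$, realize that sequence of $\Gamma$-modules as the cohomology of a contractible $\Gamma$-space relative to its cuspidal boundary, and get the vertical isomorphisms (including the degree shift on the left) from the collapse of equivariant spectral sequences whose coefficients sit in a single degree. The technical route is genuinely different, though. The paper works directly with $\cH^*$ and the discrete boundary $A=\cH^*\setminus\cH$: it applies the universal coefficients theorem to the long exact sequence of the pair $(\cH^*,A)$, extracts the short exact sequence $0\to H^0(\cH^*,E)\to H^0(A,E)\to H^1((\cH^*,A),E)\to 0$ identified with the divisor sequence, and then invokes three separate Grothendieck spectral sequences $H^i(\Gamma,H^j(\cdot,E))\Rightarrow H^{i+j}(\cdot,\widetilde{E})$ for $\cH^*$, $A$ and the pair. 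You instead pass to the Borel--Serre bordification and package everything into the two hypercohomology spectral sequences of a single short exact sequence of cochain complexes of $R[\Gamma]$-modules. Your version makes explicit where the hypothesis on $R$ enters (Shapiro plus invertibility of the orders of the cell stabilizers gives the $\Gamma$-acyclicity of the cochain modules), and it works with an honest local system on a compact quotient; the price is the extra identification of $H^i(\overline{Y},\widetilde{E})$ and $H^i((\overline{Y},\partial\overline{Y}),\widetilde{E})$ with the groups $H^i(X(\Gamma)(\mathbb{C}),\widetilde{E})$ and $H^i((X(\Gamma)(\mathbb{C}),X(\Gamma)(\mathbb{C})\setminus Y(\Gamma)(\mathbb{C})),\widetilde{E})$ that actually appear in the statement, which you flag but should spell out (collapsing each boundary circle to a cusp). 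Both arguments locate the degree shift in the same place: the relative cohomology ($H^{\bullet}((\cH^*,A),E)$ in the paper, $H^{\bullet}((A/B)^{\bullet})$ for you) is concentrated in degree one, where it equals $\Hom_{\Z}(\cD_0,E)$.
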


\begin{proof}
First of all, set $A:=\cH^*\setminus\cH$. We have the following commutative diagram with exact rows:
\[
\scalebox{0.78}{
\xymatrix{Q\ar[d]\ar@{^{(}->}[r]& H^0 (\cH^* ,E)\ar[d]\ar[r]& H^0 (A,E)\ar[d]\ar[r]& H^1 \left((\cH^* ,A),E\right)\ar[d]\ar[r]& H^1 (\cH^* ,E)\ar[d]\\
\Hom_{\Z}(K,E)\ar@{^{(}->}[r]& \Hom_{\Z}(H_0 \left(\cH^*\right) ,E)\ar[r]& \Hom_{\Z}(H_0 \left(A\right) ,E)\ar[r] &  G_1\ar[r]&  G_2}
}
\]
where
\begin{align*}
& Q:=H^0 \left((\cH^* ,A),E\right),\quad K:=H_0 \left((\cH^* ,A)\right),\\
& G_1:=\Hom_{\Z}(H_1 \left((\cH^* ,A),E\right))\oplus\Ext_{\Z}^1\left(H_0 \left((\cH^* ,A)\right),E\right),\\
& G_2:=\Hom_{\Z}(H_1 \left(\cH^*\right),E)\oplus\Ext_{\Z}^1\left(H_0 \left(\cH^*\right),E\right).
\end{align*}
Moreover, the vertical arrows in the above diagram are the isomorphisms given by the universal coefficients theorem for cohomology. In addition, we underline that the following facts hold.
\begin{enumerate}[(i)]

\item $Q=0$. Indeed, observe that
\begin{equation*}
H_0 \left((\cH^* ,A)\right)=\Coker(\xymatrix{H_0 (A)\ar[r]^-{i_*}& H_0 (\cH^*)}).
\end{equation*}
Since the boundary components of $\cH^*$ are in one-one correspondence with $\mathbb{P}^1 (\Q)$ it follows that $H_0 (A)\cong\cD$. Moreover, $H_0 (\cH^*)\cong\Z$ since $\cH^*$ is pathwise connected. With these identifications in mind, $i_*$ (which is the map induced in homology by the inclusion $\xymatrix{A\ar@{^{(}->}[r]^-{i}& \cH^*}$) can be identified with the degree map $\xymatrix{\cD\ar[r]& \Z}$, which is clearly surjective. Hence
\[
H_0((\mathcal{H}^*,A),E)=H_0 \left((\cH^* ,A)\right)\otimes_{\Z} E\cong 0.
\]

\item $\Hom_{\Z}(H_0 (\cH^*),E)\cong E$. Indeed, this fact is obvious because $\cH^*$ is pathwise connected.

\item $\Ext_{\Z}^1\left(H_0 \left(\cH^*\right),E\right)=0$. Indeed, as $\cH^*$ is pathwise connected and $\mathbb{Z}$ is projective it follows that $\Ext_{\Z}^1\left(H_0 \left(\cH^*\right),E\right)=\Ext_{\Z}^1\left(\Z,E\right)=0$.

\item $H_1 \left(\cH^*\right)=0$. Indeed, as $\cH^*$ is pathwise connected $H_1 \left(\cH^*\right)$ is the abelianization of the first fundamental group $\pi_1 (\cH^*)$. But $\pi_1 (\cH^*)\cong 1$ because $\cH^*$ is simply connected.

\end{enumerate}
In this way, combining all these facts, the above diagram becomes into the following commutative diagram with exact rows and vertical isomorphisms.
\[
\xymatrix{0\ar[r]& H^0 (\cH^* ,E)\ar[d]\ar[r]& H^0 (A,E)\ar[d]\ar[r]& H^1 \left((\cH^* ,A),E\right)\ar[d]\ar[r]& 0\\
0\ar[r]& E\ar[r]& \Hom_{\mathbb{Z}}(\cD,E)\ar[r]& \Hom_{\mathbb{Z}}(\cD_0,E)\ar[r]& 0.}
\]
Applying to the above diagram the left exact functor $(-)^{\Gamma}$ we obtain the following commutative diagram with exact rows and vertical isomorphisms.
\[
\xymatrix{H^i (\Gamma ,H^0(\cH^* ,E))\ar[d]\ar[r]& H^i (\Gamma ,H^0(A ,E))\ar[d]\ar[r]& H^i \left(\Gamma ,H^1 \left((\cH^* ,A),E\right)\right)\ar[d]\\
H^i (\Gamma ,E)\ar[r]& H^i (\Gamma ,\Hom_{\mathbb{Z}}(\cD,E))\ar[r]& H^i (\Gamma ,\Hom_{\mathbb{Z}}(\cD_0,E)).}
\]
Thus, we only need to check, for each $i\in\N$, that
\begin{align*}
& H^i (\Gamma ,H^0(\cH^* ,E))\cong H^i (X(\Gamma)(\mathbb{C}) ,\widetilde{E}),\\
& H^i (\Gamma ,H^0(A ,E))\cong H^i (X(\Gamma)(\mathbb{C})\setminus Y(\Gamma)(\mathbb{C}) ,\widetilde{E}),\mbox{ and }\\
& H^i (\Gamma ,H^1 \left((\cH^* ,A),E\right))\cong H^{i+1} \left(\left(X\left(\Gamma)(\mathbb{C}),X(\Gamma)(\mathbb{C})\setminus Y(\Gamma)(\mathbb{C}\right)\right),\widetilde{E}\right).
\end{align*}
Now, consider the following Grothendieck spectral sequences.
\begin{align*}
& \xymatrix{H^i (\Gamma ,H^j(\cH^* ,E))\ar@{=>}[r]_-{i}& H^{i+j} (X(\Gamma)(\mathbb{C}),\widetilde{E})},\\
& \xymatrix{H^i (\Gamma ,H^j(A ,E))\ar@{=>}[r]_-{i}& H^{i+j} (X(\Gamma)(\mathbb{C})\setminus Y(\Gamma)(\mathbb{C}),\widetilde{E})},\\
& \xymatrix{H^i (\Gamma ,H^j(\left(\cH^* ,A\right),E))\ar@{=>}[r]_-{i}& H^{i+j} ((X(\Gamma)(\mathbb{C}),X(\Gamma)(\mathbb{C})\setminus Y(\Gamma)(\mathbb{C})),\widetilde{E})}.
\end{align*}
We start by analyzing the first one. The foregoing calculations show, in particular, that $H^j(\cH^* ,E)=0$ for all $j\neq 0,2$. This fact implies that the source or the target of any differential of the $E_2$-page of this spectral sequence is zero and therefore it collapses providing an isomorphism
\[
H^i (\Gamma ,H^j(\cH^* ,E))\cong H^{i+j} (X(\Gamma)(\mathbb{C}) ,\widetilde{E}),
\]
for any $(i,j)\in\N^2$.
\vskip 2mm
Let us move on to the second spectral sequence. Again by the foregoing, $H^j (A,E)=0$ for all $j\neq 0$. Thus, this spectral sequence has just one non-zero row, hence it collapses yielding an isomorphism
\[
H^i (\Gamma ,H^j(A ,E))\cong H^{i+j} (X(\Gamma)(\mathbb{C})\setminus Y(\Gamma)(\mathbb{C}) ,\widetilde{E}),
\]
for any $(i,j)\in\N^2$.
\vskip 2mm
Bearing in mind that $H^j(\left(\cH^* ,A\right),E)=0$ for all $j\neq 1$, a similar argument implies that, for any $(i,j)\in\N^2$, there is an isomorphism
\[
H^i (\Gamma ,H^1 \left((\cH^* ,A),E\right))\cong H^{i+1} \left(\left(X(\Gamma)(\mathbb{C}),X(\Gamma)(\mathbb{C})\setminus Y(\Gamma)(\mathbb{C})\right),\widetilde{E}\right),
\]
just what we wanted to show.
\end{proof}
Notice that, in particular, $H^i (\Gamma ,H^1 \left((\cH^* ,A),E\right))\cong H_c^{i+1}(Y(\Gamma)(\mathbb{C}),\widetilde{E})$.

\subsection{Quadratic modular symbols}
Let $\Gamma$ be an arithmetic Fuchsian group of the first kind and fix $\tau\in\mathcal{H}^*$ ($\tau\in\mathcal{H}$ if $\Gamma$ is cocompact). Let $p$ be a prime number and let $\omega_p\in\mathcal{O}_H$ be a quaternion of reduced norm $p$. Set $\gamma_p :=\psi (\omega_p)$. We must recall (cf. \cite{shimura}) that there exists $d\in\mathbb{N}$ such that
\[
\left[\Gamma :\Gamma\cap\gamma_p \Gamma\gamma_p^{-1}\right]=d
\]
and therefore there is a coset decomposition
\[
\Gamma\gamma_p \Gamma =\bigcup_{a=1}^d \gamma_a \Gamma .
\]
Set, for each $n\in\mathbb{N}$, $I_n:=\{1,\dots,d\}^n$. Moreover, for each $\mathbf{u}=(u_1,\dots,u_n)\in I_n$ set
\[
\gamma_{\mathbf{u}}:=\prod_{t=1}^n \gamma_{u_t}.
\]
Finally, we define
\[
\Delta_{\tau ,p}:=\bigcup_{n\geq 0}\bigcup_{\mathbf{u}\in I_n}\gamma_{\mathbf{u}} \Gamma \tau .
\]
The reason why we define $\Delta_{\tau,p}$ is, on one hand, that we are interested in considering integrals along geodesics connecting points of $\Gamma\tau$ (classical modular symbols are integrals along geodesics connecting cusps), and, on the other hand, we want that the action of the coset representatives given by the matrices $\gamma_j$ respects our module of values. Notice that when $\tau=i\infty$, we have again $\Delta_{i\infty,p}=\mathbb{P}^1\left(\mathbb{Q}\right)$.
\vskip 2mm
From now on, we suppose that $\Gamma$ is cocompact and that $\tau$ is a quadratic imaginary point.
\vskip 2mm
Consider the topological pair $(\mathcal{H},\Delta_{\tau,p})$. We are interested in finding a result analogous to theorem \ref{ashstevenscohomology} in the quadratic setting.

Consider the inclusion $\xymatrix{\Delta_{\tau,p}\ar@{^{(}->}[r]^-{i}&\mathcal{H}}$ and, for any abelian group $G$, recall that
\[
H_0 \left((\mathcal{H},\Delta_{\tau,p}),G\right)=\Coker(\xymatrix{H_0 (\Delta_{\tau,p},G)\ar[r]^-{i_*}& H_0 (\cH,G)}),
\]
where $i_*$ is the map induced in $0$-th homology by the inclusion $i$.
\vskip 2mm
Consider again the beginning of the long exact sequence in homology of the pair $(\cH ,\Delta_{\tau ,p})$,
\[
\xymatrix{H_1 (\cH)\ar[r]& H_1 ((\cH ,\Delta_{\tau ,p}))\ar[r]& H_0 (\Delta_{\tau ,p})\ar[r]^-{i_*}& H_0 (\cH)\ar@{->>}[r]& H_0 ((\cH ,\Delta_{\tau ,p}))}.
\]
As $\cH$ is simply connected, $H_1 (\cH)=0$. On the other hand, as $\cH$ is pathwise connected $H_0 (\cH)=\Z$ and therefore the map $i_*$ is the map induced in homology by the degree map
\begin{align*}
& \xymatrix{C_0 (A)\ar[r]& \Z}\\
& \sum_i b_i \sigma_i\longmapsto\sum_i b_i.
\end{align*}
Here, $C_0 (A)$ denotes the free abelian group of $0$-dimensional singular simplices of $A$. In this way, it follows from this fact that $i_*$ is surjective. Hence we have the short exact sequence of abelian groups
\[
\xymatrix{0\ar[r]& H_1 ((\cH ,\Delta_{\tau ,p}))\ar[r]^-{\partial_{\tau,p}}& H_0 (\Delta_{\tau ,p})\ar[r]^-{i_*}& H_0 (\cH)\ar[r]& 0.}
\]
Set $D_{\tau,p}:=H_0\left(\Delta_{\tau,p}\right)$ and $D_{\tau,p}^0:=H_1\left(\left(\mathcal{H},\Delta_{\tau,p}\right)\right)$. Underline that the above exact sequence allows us to view $D_{\tau,p}^0$ as a subgroup of $D_{\tau,p}$. If we interpret $D_{\tau,p}$ as a group of divisors then, the foregoing implies that $D_{\tau,p}^0$ is the subgroup of divisors of degree zero.
\vskip 2mm
Since we are interested in studying modular symbols of weight 2, we shall consider $G=\mathbb{C}$. Denote
\begin{align*}
& Q_1:=\Hom_{\Z}(H_1((X,A)),\mathbb{C})\oplus\Ext_{\Z}^1 (H_0((X,A)),\mathbb{C}),\\
& Q_2:=\Hom_{\Z}(H_1(X),\mathbb{C})\oplus\Ext_{\Z}^1 (H_0(X),\mathbb{C}).
\end{align*}
Then, we have the following commutative diagram with exact rows, where the vertical arrows are the isomorphisms given by the universal coefficients theorem for cohomology:
\[
\scalebox{0.75}{
\xymatrix{H^0 ((X,A),\mathbb{C})\ar[d]\ar@{^{(}->}[r]& H^0 (X,\mathbb{C})\ar[d]\ar[r]& H^0 (A,\mathbb{C})\ar[d]\ar[r]& H^1 ((X,A),\mathbb{C})\ar[d]\ar[r]& H^1 (X,\mathbb{C})\ar[d]\\
\Hom_{\Z}(H_0((X,A)),\mathbb{C})\ar@{^{(}->}[r]& \Hom_{\Z}(H_0(X),\mathbb{C})\ar[r]& \Hom_{\Z}(H_0(A),\mathbb{C})\ar[r]& Q_1 \ar[r]& Q_2.}
}
\]
If we particularize the above diagram to our pair then we obtain the following result.
\begin{proposition}
There is a commutative diagram with exact rows, where the vertical arrows are the isomorphisms given by the universal coefficients theorem for cohomology:
\[
\xymatrix{0\ar[r]& H^0\left(\mathcal{H},\mathbb{C}\right)\ar[d]\ar[r] & H^0\left(\Delta_{\tau,p},\mathbb{C}\right)\ar[d]\ar[r] & H^1\left((\mathcal{H},\Delta_{\tau,p}),\mathbb{C}\right)\ar[d]\ar[r] & 0\\
0\ar[r]& \Hom_{\mathbb{Z}}\left(H_0\left(\mathcal{H}\right),\mathbb{C}\right)\ar[r]& \Hom_{\mathbb{Z}}\left(D_{\tau,p},\mathbb{C}\right)\ar[r]& \Hom_{\mathbb{Z}}\left(D_{\tau,p}^0,\mathbb{C}\right)\ar[r]& 0.}
\]
\end{proposition}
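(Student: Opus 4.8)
The plan is to obtain the asserted diagram by specialising, to $X=\mathcal{H}$ and $A=\Delta_{\tau,p}$, the general commutative diagram displayed immediately before the statement, and then simplifying each of its five columns using the topology of $\mathcal{H}$ and $\Delta_{\tau,p}$ together with the short exact sequence $0\to D_{\tau,p}^0\to D_{\tau,p}\to H_0(\mathcal{H})\to 0$ already established above. This is, essentially, the quadratic transcription of the first half of the proof of Theorem \ref{ashstevenscohomology}, with $\mathbb{P}^1(\mathbb{Q})$ replaced by $\Delta_{\tau,p}$ and $\mathcal{H}^*$ replaced by $\mathcal{H}$.

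First I would record the relevant (co)homology. Since $\mathcal{H}$ is convex, hence contractible, one has $H_0(\mathcal{H})\cong\mathbb{Z}$, $H_1(\mathcal{H})=0$ and $H^0(\mathcal{H},\mathbb{C})\cong\mathbb{C}$; as $\mathbb{Z}$ is $\mathbb{Z}$-free, $\Ext_{\mathbb{Z}}^1(H_0(\mathcal{H}),\mathbb{C})=0$, so $Q_2=0$ and the universal coefficients isomorphism forces $H^1(\mathcal{H},\mathbb{C})=0$; this lets me replace the right-hand column of the general diagram by $0$. Next, the text has already shown that $i_*\colon H_0(\Delta_{\tau,p})\to H_0(\mathcal{H})$ is surjective, i.e. $H_0((\mathcal{H},\Delta_{\tau,p}))=0$; consequently $H^0((\mathcal{H},\Delta_{\tau,p}),\mathbb{C})=\Hom_{\mathbb{Z}}(0,\mathbb{C})=0$ and $\Ext_{\mathbb{Z}}^1(H_0((\mathcal{H},\Delta_{\tau,p})),\mathbb{C})=0$, so the left-hand column of the general diagram is $0$ and $Q_1=\Hom_{\mathbb{Z}}(H_1((\mathcal{H},\Delta_{\tau,p})),\mathbb{C})=\Hom_{\mathbb{Z}}(D_{\tau,p}^0,\mathbb{C})$. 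Finally $H_0(\Delta_{\tau,p})=D_{\tau,p}$ by definition, so the middle terms $H^0(A,\mathbb{C})$ and $\Hom_{\mathbb{Z}}(H_0(A),\mathbb{C})$ become $H^0(\Delta_{\tau,p},\mathbb{C})$ and $\Hom_{\mathbb{Z}}(D_{\tau,p},\mathbb{C})$, while $H^0(X,\mathbb{C})=H^0(\mathcal{H},\mathbb{C})$ and $\Hom_{\mathbb{Z}}(H_0(X),\mathbb{C})=\Hom_{\mathbb{Z}}(H_0(\mathcal{H}),\mathbb{C})$.

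With these substitutions the five-column diagram collapses precisely to the four-term diagram in the statement. Its bottom row is what one gets by applying the contravariant functor $\Hom_{\mathbb{Z}}(-,\mathbb{C})$ to $0\to D_{\tau,p}^0\to D_{\tau,p}\to H_0(\mathcal{H})\to 0$; since $H_0(\mathcal{H})\cong\mathbb{Z}$ is free this sequence splits, so the resulting $\Hom$-sequence stays short exact. Its top row is the corresponding segment of the long exact cohomology sequence of the pair $(\mathcal{H},\Delta_{\tau,p})$, whose two outer terms we have just identified with $0$. Commutativity of the diagram and the fact that the vertical maps are isomorphisms are exactly the naturality and the isomorphism assertion of the universal coefficients theorem, already invoked for the general diagram.

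I do not expect a genuine obstacle. The only point deserving a word of care is that $\Delta_{\tau,p}$ is a countably infinite discrete subset of $\mathcal{H}$, so $D_{\tau,p}=H_0(\Delta_{\tau,p})$ is free abelian of infinite rank; but the universal coefficients theorem over the principal ideal domain $\mathbb{Z}$ is valid for arbitrary spaces and arbitrary coefficient modules, with no finiteness hypothesis, so nothing is lost. For completeness one may also note that $H_1(\Delta_{\tau,p})=0$ and, by the long exact sequence together with the contractibility of $\mathcal{H}$, that $H_j((\mathcal{H},\Delta_{\tau,p}))=0$ for every $j\neq 1$; this is what guarantees that the displayed top row is an honest short four-term exact sequence rather than a fragment of a longer one.
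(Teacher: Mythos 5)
Your proof is correct and follows essentially the same route as the paper: specialise the general universal-coefficients diagram to the pair $(\mathcal{H},\Delta_{\tau,p})$, use $H_0((\mathcal{H},\Delta_{\tau,p}))=0$ and the contractibility of $\mathcal{H}$ to kill the Ext terms and the two outer columns, and identify the surviving terms via $D_{\tau,p}$ and $D_{\tau,p}^0$. If anything, your explicit identification $Q_1=\Hom_{\mathbb{Z}}(H_1((\mathcal{H},\Delta_{\tau,p})),\mathbb{C})=\Hom_{\mathbb{Z}}(D_{\tau,p}^0,\mathbb{C})$ is stated more carefully than the paper's own wording, which only records the vanishing of the $\Ext$ summand.
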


\begin{proof}
All we have to see is that the Ext terms are zero. First of all, as we know that $H_0\left(\left(\mathcal{H},\Delta_{\tau,p}\right)\right)=0$ it follows that $Q_1=0$. Secondly, since $\mathcal{H}$ is pathwise connected, we have that $Q_2=\Ext_{\mathbb{Z}}^1\left(H_0(\mathcal{H}),\mathbb{C}\right)=0$. Notice that the left terms in each row are isomorphic to $\mathbb{C}$.
\end{proof}
By passing to the long exact sequence (acting $\Gamma$ on each term), we obtain the following commutative diagram with exact rows and vertical isomorphisms:
\begin{equation}\label{exactalarga}
\xymatrix{H^{i-1}\left(\Gamma,\Hom_{\Z}\left(D_{\tau,p}^0,\mathbb{C}\right)\right)\ar[d]\ar[r] & H^i\left(\Gamma,\mathbb{C}\right) \ar[d]\ar[r]& H^i\left(\Gamma,\Hom_{\Z}\left(D_{\tau,p},\mathbb{C}\right)\right)\ar[d]\\
H^{i-1}\left(\Gamma,H^1\left(\left(\mathcal{H},\Delta_{\tau,p}\right),\mathbb{C}\right)\right)\ar[r] & H^i\left(\Gamma,H^0(\mathcal{H},\mathbb{C})\right)\ar[r]& H^i\left(\Gamma,H^0\left(\Delta_{\tau,p},\mathbb{C}\right)\right).}
\end{equation}

\begin{definition}Let $\tau\in\mathcal{H}$ be a quadratic imaginary point. The \emph{$\C$-valued quadratic modular symbols attached to $\tau$} is the $\C$-vector space
\[
H^0\left(\Gamma,\Hom_{\Z}\left(D_{\tau,p}^0,\mathbb{C}\right)\right).
\]
We shall denote this space in what follows by $\Symb\left(\Delta_{\tau,p},\C\right)^{\Gamma}$.
\end{definition}
Now, we consider the following Grothendieck spectral sequences
\begin{align*}
& \xymatrix{H^i (\Gamma ,H^j(\cH ,\C))\ar@{=>}[r]_-{i}& H^{i+j} (X(\Gamma)(\mathbb{C}),\C)},\\
& \xymatrix{H^i (\Gamma ,H^j(\Delta_{\tau ,p} ,\C))\ar@{=>}[r]_-{i}& H^{i+j} (\Gamma\backslash\Delta_{\tau ,p} ,\C)},\\
& \xymatrix{H^i (\Gamma ,H^j(\left(\cH ,\Delta_{\tau ,p}\right),\C))\ar@{=>}[r]_-{i}& H^{i+j} ((X(\Gamma)(\mathbb{C}),\Gamma\backslash\Delta_{\tau ,p}),\C).}
\end{align*}
We start by analyzing the first one. The foregoing calculations imply, in particular, that $H^j(\cH ,\C)=0$ for all $j\neq 0$. This fact implies that this spectral sequence has just one non-zero row and therefore it collapses providing an isomorphism
\[
H^i (\Gamma ,H^j(\cH ,\C))\cong H^{i+j} (X(\Gamma)(\C) ,\C),
\]
for any $(i,j)\in\N^2$.
\vskip 2mm
Let us move on to the second spectral sequence. Again by the foregoing, $H^j (\Delta_{\tau ,p},\C)=0$ for all $j\neq 0$. Thus, this spectral sequence has just one non-zero row, hence it collapses yielding an isomorphism
\[
H^i (\Gamma ,H^j(\Delta_{\tau ,p} ,\C))\cong H^{i+j} (\Gamma\backslash\Delta_{\tau ,p} ,\C),
\]
for any $(i,j)\in\N^2$.
\vskip 2mm
Bearing in mind that $H^j(\left(\cH ,\Delta_{\tau ,p}\right),\C)=0$ for all $j\neq 1$, a similar argument implies that, for any $(i,j)\in\N^2$, there is an isomorphism
\[
H^i (\Gamma ,H^1 \left((\cH ,\Delta_{\tau ,p}),\C\right))\cong H^{i+1} \left(\left(X(\Gamma)(\C),\Gamma\backslash\Delta_{\tau ,p}\right),\C\right).
\]
All the foregoing facts, together with diagram \eqref{exactalarga}, allow us to establish the main result of this section.

\begin{theorem}
The space of $\C$-valued quadratic modular symbols $\Symb\left(\Delta_{\tau,p},\C\right)^{\Gamma}$ attached to $\tau$ is canonically isomorphic to $H^1 \left(\left(X(\Gamma)(\C) ,\Gamma\backslash\Delta_{\tau ,p}\right),\C\right)$ as $\C$-vector space.
\end{theorem}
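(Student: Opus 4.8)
The plan is to deduce the theorem by splicing together two canonical isomorphisms that are already available: the universal coefficients isomorphism appearing as the left vertical arrow of diagram \eqref{exactalarga}, read at cohomological degree $i=1$, and the collapse isomorphism of the third Grothendieck spectral sequence, read at bidegree $(i,j)=(0,1)$. No new input is needed beyond the computations carried out above; the proof proper is a two-line composition, and the only genuine work is to check that this composition is well defined and canonical.

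In detail, by definition $\Symb(\Delta_{\tau,p},\C)^{\Gamma}=H^0(\Gamma,\Hom_{\Z}(D_{\tau,p}^0,\C))$. Specializing diagram \eqref{exactalarga} to $i=1$, its leftmost vertical map is an isomorphism
\[
H^0\bigl(\Gamma,\Hom_{\Z}(D_{\tau,p}^0,\C)\bigr)\;\xrightarrow{\ \sim\ }\;H^0\bigl(\Gamma,H^1((\mathcal{H},\Delta_{\tau,p}),\C)\bigr),
\]
induced by the universal coefficients isomorphism $\Hom_{\Z}(D_{\tau,p}^0,\C)\cong H^1((\mathcal{H},\Delta_{\tau,p}),\C)$, the relevant $\Ext$ term vanishing since $H_0((\mathcal{H},\Delta_{\tau,p}))=0$. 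On the other hand, the $(i,j)=(0,1)$ instance of the collapse isomorphism
\[
H^i\bigl(\Gamma,H^1((\mathcal{H},\Delta_{\tau,p}),\C)\bigr)\cong H^{i+1}\bigl((X(\Gamma)(\C),\Gamma\backslash\Delta_{\tau,p}),\C\bigr)
\]
gives $H^0(\Gamma,H^1((\mathcal{H},\Delta_{\tau,p}),\C))\cong H^1((X(\Gamma)(\C),\Gamma\backslash\Delta_{\tau,p}),\C)$. Composing the two displays yields the asserted isomorphism of $\C$-vector spaces.

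It then remains only to recall why the two ingredients hold and are canonical. For the spectral sequence one uses that $H^j((\mathcal{H},\Delta_{\tau,p}),\C)=0$ for $j\neq 1$: this follows from the long exact sequence of the pair, since $\mathcal{H}$ is contractible (so $H^j(\mathcal{H},\C)$ is $\C$ for $j=0$ and zero otherwise), $\Delta_{\tau,p}$ is discrete (so $H^j(\Delta_{\tau,p},\C)=0$ for $j>0$), and the restriction $H^0(\mathcal{H},\C)\to H^0(\Delta_{\tau,p},\C)$ is injective; hence only the row $j=1$ survives, the spectral sequence degenerates at $E_2$, and its edge homomorphism supplies the displayed isomorphism. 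Canonicity of the whole composite is then immediate: the universal coefficients isomorphism is natural in both arguments once the $\Ext$ obstruction vanishes, and edge homomorphisms of Grothendieck spectral sequences are functorial.

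I expect the main obstacle to be foundational rather than conceptual, namely the careful construction of the third (equivariant, Cartan--Leray type) Grothendieck spectral sequence for the \emph{pair} $(\mathcal{H},\Delta_{\tau,p})$ together with the identification of its abutment with $H^{\bullet}\bigl((X(\Gamma)(\C),\Gamma\backslash\Delta_{\tau,p}),\C\bigr)$. The $\Gamma$-action on $\Delta_{\tau,p}$ need not be free and $\Delta_{\tau,p}$ is a countable, possibly accumulating, discrete subset of $\mathcal{H}$, so one cannot directly identify the quotient with a classifying space; instead one passes to the Borel construction $E\Gamma\times_{\Gamma}(\mathcal{H},\Delta_{\tau,p})$, notes that projection to $\Gamma\backslash(\mathcal{H},\Delta_{\tau,p})=(X(\Gamma)(\C),\Gamma\backslash\Delta_{\tau,p})$ is an isomorphism on cohomology with $\C$-coefficients because all stabilizers are finite and $\C$ has characteristic zero, and reads off the Leray--Serre spectral sequence of $E\Gamma\times_{\Gamma}(\mathcal{H},\Delta_{\tau,p})\to B\Gamma$. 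Once this framework is in place, the theorem is exactly the two-step composition described above.
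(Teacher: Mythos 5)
Your proposal is correct and follows essentially the same route as the paper: the theorem is obtained by composing the universal-coefficients isomorphism $\Hom_{\Z}(D_{\tau,p}^0,\C)\cong H^1((\mathcal{H},\Delta_{\tau,p}),\C)$ (the left vertical arrow of diagram \eqref{exactalarga} at $i=1$) with the collapse of the third Grothendieck spectral sequence at bidegree $(0,1)$. Your additional remarks on the construction of the equivariant spectral sequence for the pair (Borel construction, finite stabilizers in characteristic zero) supply justification that the paper leaves implicit, but do not change the argument.
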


\bibliographystyle{amsplain}
\bibliography{versionspringer}

\end{document}